\def\a {{\boldsymbol a}}
\def\d {{\boldsymbol d}}
\def\f {{\boldsymbol f}}
\def\u {{\boldsymbol u}}
\def\n {{\boldsymbol n}}
\def\x {{\boldsymbol x}}
\def\v {{\boldsymbol v}}
\def\w {{\boldsymbol w}}
\def\utilde{{\widetilde{\boldsymbol u}}}
\def\utecho{{\widehat{\boldsymbol u}}}
\def\Om {{\Omega}}
\def\D{{\boldsymbol D}}
\def\H{{\boldsymbol H}}
\def\V{{\boldsymbol V}}
\def\L{{\boldsymbol L}}
\def\W{{\boldsymbol W}}
\def\X{{\boldsymbol  X}}
\def\Y{{\boldsymbol  Y}}
\def\Emat{{\boldsymbol{\mathsf{E}}}}
\def\Mmat{{\boldsymbol{\mathsf{M}}}}
\def\Cmat{{\boldsymbol{\mathsf{C}}}}
\def\Jmat{{\boldsymbol{\mathsf{J}}}}
\def\Lmat{{\boldsymbol{\mathsf{L}}}}
\def\Wvec{\mathsf{W}}
\def\Dvec{\mathsf{D}}
\def\Uvec{\mathsf{U}}
\def\Pvec{\mathsf{P}}
\def\Fvec{\mathsf{F}}
\def\R {{\rm I}\hskip -0.85mm{\rm R}}
\newtheorem{theorem}{Theorem}
\newtheorem{lemma}[theorem]{Lemma}
\newtheorem{proposition}[theorem]{Proposition}
\newtheorem{corollary}[theorem]{Corollary}
\newenvironment{proof}{{\it  Proof}.}{\hfill $\square$\newline}
\title{A time-splitting finite-element approximation for the Ericksen-Leslie equations}
\author{R.C. Cabrales\thanks{Grupo de Matem\'aticas Aplicadas,
Universidad del B\'\i o-B\'\i o, Casilla 447, Chill\'an, Chile.
E-mail: {\tt roberto.cabrales@gmail.com}. Partially supported under grants
GI 121909/C Universidad del B\'\i o-B\'\i o, Chile and 
Ministerio de Econom\'ia y Competitividad under grant MTM2012-32325, Spain.} 
\and F. Guill\'en-Gonz\'alez\thanks{Dpto.~E.D.A.N., Universidad 
de Sevilla Sevilla, Aptdo.~1160, 41080 Sevilla, Spain. 
E-mail: {\tt guillen@us.es}. Partially supported by  Ministerio de Econom\'ia y Competitividad under grant MTM2012-32325, Spain. }
\and J. V. Guti\'errez-Santacreu\thanks{ Dpto. de 
Matem\'atica Aplicada I, Universidad de Sevilla, E.
T. S. I. Inform\'atica. Avda. Reina Mercedes, s/n. 41012 Sevilla,
Spain. {\tt juanvi@us.es}. Partially supported by  Ministerio de Econom\'ia y Competitividad and Ministerio de Educaci\'on under grants MTM2012-32325 and JC2011-418, Spain.} }
\begin{document}

\maketitle

\begin{abstract}      
In this paper we propose a time-splitting finite-element scheme for approximating solutions of the Ericksen-Leslie equations governing the flow of nematic liquid crystals. These equations are to be solved for a velocity vector field and a scalar pressure   as well as a director vector field representing the direction along which the molecules of the liquid crystal are oriented. 

The algorithm is designed at two levels. First, at the variational level, the velocity,  pressure and director  are computed separately, but the director field has to be computed together with an auxiliary variable in order to deduce a priori energy estimates. Second, at the algebraic level, one can avoid computing such an auxiliary variable if this is approximated by a piecewise constant finite-element space.  Therefore, these two steps give rise to a numerical algorithm that computes separately only the primary variables. Moreover, we will use a pressure stabilization technique that allows an stable equal-order interpolation for the velocity and the pressure. Finally, some numerical simulations are performed in order to show the robustness and  efficiency of the proposed numerical scheme and its accuracy. 
\end{abstract}

\noindent{\bf Mathematics Subject Classification:}Nematic liquid crystal; Finite elements; Projection method; Time-splitting method.

\noindent{\bf Keywords:} 35Q35, 65M60, 76A15

%

\section{Introduction}

There has been a great interest in the finite-element numerical approximation of liquid crystal flows in recent years. The reason for this is that liquid crystals are not easy to be studied from experimental observations due to the effect of boundary conditions of the confining geometries. Thus, numerical simulations allow a clear insight into the behavior of liquid crystals and the understanding of their underlying physical properties. For instance, numerical simulations contribute to improve the design of practical devices.

Liquid crystals are materials that show intermediate transitions between a solid and a liquid called {\it mesophases}. It means that  liquid crystals combine properties of both an isotropic liquid and  a crystalline solid. These mesophases are due, in part, to the  fact that liquid crystals are made of macromolecules of similar size, which are commonly represented like rods or plates. It is also known that the shape of the molecules play an important role in such mesophases.  Moreover, liquid crystals depend on the temperature (thermotropic) and/or the concentration of a solute in a solvent (lyotropic) so that they can change from liquid to solid by means of varying the temperature and/or the concentration.  

The mathematical theory describes liquid crystals attending to the different degrees of positional or orientational ordering of their molecules. Thus, the positional order alludes to the position of the molecules while the orientation order referred to the fact that the molecules to tend to be locally aligned towards certain preferred direction. Such a direction is described by a unit vector along the molecule if rod-shaped or perpendicular to the molecule if plate-shaped measuring the mean values of alignments. 

The simplest phase of liquid crystals is called {\it nematic} which 
possesses an orientational ordering but not positional. That is, the 
molecules flow freely as in a disordered isotropic liquid phase while 
tend to be orientated along a direction which can be manipulated 
with mechanical (boundary conditions), magnetic or electric forces.

The simplest phenomenological description of spatial 
configurations in nematic liquid crystals is the Oseen-Frank 
theory \cite{Oseen, Frank}. This approach consists in modeling 
equilibrium states as minima of a free-energy functional which is set 
up through symmetry and invariance principles, to capture some 
properties observed from experiments. Thus, the Oseen-Frank free 
energy is considered as a functional of the director vector $\d$. 
In its most basic form, the free energy functional is given by
\begin{equation*}
E(\d)=\int_{\Omega}\left\{K_1|\nabla\cdot\nabla
\d|^2+K_2(\d\cdot (\nabla\times \d))^2
+K_3(\d\times(\nabla\times\d))^2\right\},
\end{equation*}
where $K_1$, $K_2$, and $K_3$ are the \textit{splay},
\textit{twist}, and \textit{bend} elastic constants, respectively. 
Note that when these constants are equal, the Dirichlet energy becomes
$$
E(\d)=K\int_{\Omega}|\nabla \d|^2.
$$
Upon minimizing this energy subject to the sphere constraint
$|\d|=1$, the following optimality system appears
\begin{equation*}
-\Delta\d-|\nabla\d|^2\d=\boldsymbol{0}\quad \mbox{ in }\quad\Omega.
\end{equation*}

The limitation of the Oseen-Frank theory relies on the fact that 
it can only explain point defects in liquid crystal materials but 
not the more complicated line and surface defects that are also 
observed experimentally. The defect points or singularities in liquid 
crystals are regions where the anisotropic properties of molecules 
are broken. That is, the liquid crystal behaves as an isotropic fluid. 
Therefore, the director field cannot be defined. Mathematically,
they are modeled by $|\d|=0$. One way of inducing defect points 
is with the help of the boundaries conditions. 

The motion of defect points in liquid crystals can be studied via 
the long-time behavior of the harmonic map flow for which it is also
interesting to incorporate the influence of the velocity. 
On the contrary, in many situations, the anisotropic local orientation 
of the director field influences the stress tensors that govern the 
fluid velocity. The hydrodynamic theory of nematic liquid crystals 
was established by Ericksen \cite{Ericksen1, Ericksen2} and Leslie
\cite{Leslie1, Leslie2}. The fundamental system consists of a set 
of fully coupled, macroscopic equations, that contains the 
Oseen-Frank elastic theory governing the steady state, 
equilibrium solutions.

The remaining part of this paper is organized as follows. Section 2 
starts by establishing some notation used throughout this paper. 
Then we follow with the differential formulation of the Ericksen-Leslie 
and the Ginzburg-Landau equations. To end the section, we sum up the main
 contributions on the finite-element approximation of the Ginzburg-Landau 
 equations. In Section 3 we give some short-hand notation for 
 finite-element spaces in order to be able to define the projection 
 time-stepping algorithm and give a brief introduction to some key 
 ideas leading to the proposed method. Next, in Section 4, we prove a 
 priori estimates for the algorithm. Section~5 is devoted to some 
 implementation improvements. Finally, we validate the numerical 
 scheme with some simulations.         

\section {Statement of the problem}

Let $\Omega\subset \R^M, M=2,3$ be any bounded open 
set with boundary $\partial\Omega$. For $1\le p \le \infty$, $L^p(\Omega)$ denote the space of $p$th-power 
integrable real-valued functions defined on $\Omega$  for the Lebesgue 
measure. This space is a Banach space endowed with the norm
$\|v\|_{L^p(\Omega)}=(\int_{\Omega}|v(\x)|^p\,{\rm d}\x)^{1/p}$ for $1\le p <\infty$ or
$\|v\|_{L^\infty(\Omega)}={\rm ess}\sup_{\x\in \Omega}|v(\x)|$ for $p=\infty$. In particular, $L^2(\Omega)$ is a Hilbert space with 
the inner product
$$
\left(u,v\right)=\int_{\Omega}u(\x)v(\x){\rm d}\x,
$$
and its norm is simply denoted by $\|\cdot\|$.
For $m$ a non-negative integer, we define the classical Sobolev spaces as 
$$
H^{m}(\Omega) = \{v \in
L^p(\Omega)\,;\, \partial^k v \in L^p(\Omega)\ \forall ~ |k|\le m\},
$$ 
associated to the norm 
$$
\|v\|_{H^{m}(\Omega)} =\left[\sum_{0\le |k| \le m} \|\partial^k
v\|^2\right]^{\frac{1}{2}}\,,
$$ 
where $k = (k_1,\ldots,k_M)\in{\mathds{N}^M}$ is a
multi-index and $|k| = \sum_{i=1}^M k_i$, which is 
a Hilbert space with the obvious inner
product. We will use boldfaced letters 
for spaces of vector functions and their elements, e.g. $\L^2(\Omega)$ in 
place  of $L^2(\Omega)^M$.

Let $\mathcal{D}(\Omega)$ be the space of infinitely times
differentiable functions with compact support on $\Omega$. 
The closure of ${\cal D}(\Omega)$ in
$H^{m}(\Omega)$ is denoted by $H^{m}_0(\Omega)$. 
We will also make use of the following space of vector fields:
$$
\boldsymbol{\mathcal{V}}=\{\v\in \boldsymbol{\mathcal{D}}(\Omega): \nabla\cdot\v=0 \mbox{ in } \Omega \}. 
$$
We denote by $\H$ and $\V$, the closures of $\boldsymbol{\mathcal{V}}$, in the $\L^2(\Omega)$- and $\H^1(\Omega)$-norm, respectively,   
which are characterized by (see \cite{Temam})
\begin{eqnarray*}
\H&=& \{ \u \in \L^2(\Omega) : \nabla\cdot\u =0 \mbox{ in } 
\Omega, \u\cdot\boldsymbol{n} = 0 \hbox{ on }
\partial\Omega \},\\
\V&=& \{\u \in \H^1(\Omega) : \nabla\cdot\u =0 \mbox{ in } 
\Omega, \u = \boldsymbol{0} \hbox{ on } \partial\Omega \},
\end{eqnarray*}
where $\n$ is the outward normal to $\Omega$ on $\partial \Omega$. This characterization is valid for $\Omega$ being Lipschitzian.   
Finally, we consider 
$$
L^2_0(\Omega)= \{ p \in L^2(\Omega) : \ \int_\Omega
p(\x)\, d\x =0 \}.
$$

\subsection {The Ericksen-Leslie problem}

Let $T>0$ be a  fixed time. We will use the notation $Q=\Omega\times(0,T)$ and 
$\Sigma=\partial\Omega\times(0,T)$. The Ericksen-Leslie equations are written as
\begin{subequations}\label{Ericksen-Leslie-Problem}
\begin{empheq}[left=\empheqlbrace]{align}
\partial_t \d+  (\u\cdot \nabla) \d - \gamma\Delta\d  
- \gamma |\nabla \d|^2 \d &=\mathbf {0}\quad\mbox{ in $Q$},\label{ELP1}\\
|\d|&= 1 \quad\mbox{ in $Q$},\label{ELP2}\\
\partial_t\u + (\u\cdot \nabla) \u - \nu \Delta \u+
\nabla p + \lambda\nabla\cdot ((\nabla \d )^T \nabla \d)
         & = \mathbf {0}\quad\mbox{ in $Q$,}\label{ELP3}\\
        {\nabla \cdot} \,\u & =  0\quad\mbox{ in $Q$},\label{ELP4}
\end{empheq}
\end{subequations}
where $\u:\overline Q \to \R^M$ is the fluid velocity, $p: \overline Q \to \R$ is 
the fluid pressure, and $\d: \overline Q \to \R^M$ is the orientation 
of the molecules. The parameter  $\nu>0$ is a constant depending 
on the fluid viscosity, $\lambda>0$ is an elasticity constant, and 
$\gamma>0$ is a relaxation time constant. The operators involve in system 
\eqref{Ericksen-Leslie-Problem} are described as follows. 
The Laplacian operator $\Delta \u=\sum_{i=1}^M\partial_{ii}\u $, 
the convective operator $(\u\cdot\nabla)\w=\sum_{i=1}^M u_i\partial_i\w$, 
and the divergence operator $\nabla\cdot\u=\sum_{i=1}^M \partial_i u_i$. 
 Moreover, $(\nabla \d)^T$ denotes the transposed matrix of $\nabla
  \d=(\partial_j d_i)_{i,j}$ and $|\d|=|\d(\x,t)|$ is the Euclidean 
 norm in $\R ^M$.

The system \eqref{Ericksen-Leslie-Problem} provides a 
phenomenological description for the hydrodynamics of 
nematic liquid crystals from the macroscopic 
point of view. It was reduced to essentials by Lin 
\cite{Lin} from the fundamental  set of fully, coupled, 
macroscopic equations  derived by Ericksen 
\cite{Ericksen1, Ericksen2} and Leslie 
 \cite{Leslie1, Leslie2}, that contains the Oseen-Frank 
 elastic energy governing the steady state, equilibrium 
 solutions of nematic liquid crystals.

Equation $\eqref{ELP1}$ is the equation for the conservation of the 
angular momentum; in particular, it is a convective 
harmonic heat map flow equation into spheres together with equation 
$\eqref{ELP2}$. This latter indicates that $\d$ is not a state 
variable, it only describes the orientation of the 
nematic liquid crystal molecules. Equations 
$\eqref{ELP3}$ and  $\eqref{ELP4}$ 
are the Navier-Stokes equations related to  the 
conservation of the linear momentum. 
The molecules add (elastic) stress to the fluid via the term 
$\lambda\nabla\cdot ((\nabla \d )^T \nabla \d)$ and the fluid carries 
the molecules via the term $(\u\cdot\nabla)\d$.  
  
To these equations we will add homogeneous Dirichlet conditions for the velocity field  and homogeneous Neumann  boundary conditions  for the director field:
\begin{equation}\label{boundary-conditions}
    \u(\x,t)=\boldsymbol{0}, \quad \partial_{\n}\d(\x,t)= \boldsymbol{0}
    \quad\mbox{ for $(\x,t)\in\Sigma$,}
\end{equation}
and the initial conditions
\begin{equation}\label{initial-conditions}
    \d(\textit{\textbf{x}},0)=\d_0(\textit{\textbf{x}}),\quad \quad
    \u(\textit{\textbf{x}},0)=\u_0(\textit{\textbf{x}})\quad\mbox{ for
    $\x\in\Om$.}
\end{equation}
Here  $\u_0:\Omega\to\R^M$, with $\u_0\in \H$, and
$\d_0:\Omega\to\R^M$, with $\d\in \H^1(\Omega)$ satisfying 
$|\d|=1$ in $\Omega$, are given functions.
One can prove the following energy law for system 
\eqref{Ericksen-Leslie-Problem}:
\begin{equation}\label{Ericksen-Leslie-Energy}
\frac{d}{dt}\left( \frac{1}{2}\|\u\|^2 
+ \frac{\lambda}{2} \|\nabla\d\|^2\right) +
\nu\|\nabla\u\|^2 + \lambda\gamma \|\Delta\d+|\nabla\d|^2\d\|^2 = 0,
\end{equation}
but it requires that $\d$ must have the unit length, i.e., $|\d|=1$ 
almost everywhere in $Q$. It makes system \eqref{Ericksen-Leslie-Problem}
difficult to manage from the numerical point of view since the 
satisfaction of the sphere constraint at the nodes does not imply 
at any other points via interpolation. For this reason, two approaches 
have been considered for dealing with it: a penalty method and a 
saddle-point method. These techniques provide numerical schemes with 
an associated energy law without the need of satisfying the sphere 
constraint for $\d$. The penalty method has intensively studied over 
the saddle-point strategy since this latter is more challenging to 
perform the numerical analysis rigorously. The difficulty lies in 
proving an inf-sup condition for the Lagrangian multiplier related 
to the sphere constraint. In order for such an inf-sup condition 
\cite{Hu-Tai-Winther} to hold, a stronger regularity than the one 
provided by \eqref{Ericksen-Leslie-Energy} is needed; therefore 
establishing an inf-sup condition under the regularity stemmed from 
\eqref{Ericksen-Leslie-Problem} is still an interesting, open problem.   

\subsection{The Ginzburg-Landau problem}
 
The penalization argument is typically based on the Ginzburg-Landau 
penalty function \cite{Lin}. Thus, system \eqref{Ericksen-Leslie-Problem} 
in its penalty version reads as:         
\begin{subequations}\label{Ginzburg-Landau-Problem}
\begin{empheq}[left=\empheqlbrace]{align}
\partial_t \d+  \u\cdot \nabla \d + \gamma ( \f_{\varepsilon}(\d)  -
        \Delta\d ) & = \mathbf{0}\quad\mbox{ in  $Q$,}\label{GLP1}\\
        \partial_t\u + \u\cdot \nabla \u - \nu \Delta \u+
        \nabla p + \lambda\nabla\cdot ((\nabla \d)^T \nabla \d)
        & = \boldsymbol{0}\quad\mbox{ in $ Q$,}\label{GLP2}\\
        \nabla\cdot \,\u & =  0\quad\mbox{ in $ Q$,}\label{GLP3}
\end{empheq}
\end{subequations}
where
\begin{equation}\label{Pen_func}
\f_{\varepsilon} (\d)= \frac{1}{\varepsilon^{2}}\left(|\d|^2-1\right)\d,
\end{equation}
is the penalty function related  to  the constraint $|\d|=1$, and
$\varepsilon>0$ is the penalty parameter. It is important to observe
that $\f_{\varepsilon}$ is the gradient of the scalar potential
function
$$
F_{\varepsilon} (\d)=\frac{1}{4\varepsilon^2}(|\d|^2-1)^2,
$$
that is,  $\f_{\varepsilon} (\d) = \nabla_\d F_{\varepsilon}(\d)$
for all $\d\in \R^M$.  The virtue of system \eqref{Ginzburg-Landau-Problem} is that its solutions satisfy an energy law without assuming any 
restriction on $\d$  as was mentioned above. We give here a sketch of 
the proof of the energy estimate obtained in \cite{Becker-Feng-Prohl} 
based on that of \cite{Lin-Liu}  in order to have a clear picture of 
how our numerical scheme is designed. First, note that  
$$
\lambda\nabla\cdot((\nabla\d)^T\nabla\d)=\lambda\nabla
\left(\frac{1}{2}|\nabla\d
|^2+F_\varepsilon(\d)\right)-\lambda(\nabla\d)^T(-\Delta
\d+\f_\varepsilon(\d)),
$$
and 
$$ 
(\u\cdot\nabla)\d \cdot (-\Delta\d+\f_\varepsilon(\d))=
(\nabla\d)^T(-\Delta\d-\f_\varepsilon(\d))\cdot\u.
$$ 
Next, multiplying equations $\eqref{GLP1}$ and $\eqref{GLP2}$ by 
$-\Delta\d+\f_\varepsilon(\d)$ and $\u$, respectively,  and 
integrating over $\Omega$, we obtain, after some integrations by parts: 
\begin{equation}\label{Ginzburg-Landau-Energy}
\frac{d}{dt}\left(\frac{1}{2}\|\u\|^2+\frac{\lambda}{2}\|\nabla\d
\|^2 + \lambda \int_{\Omega}F_{\varepsilon} (\d)\right)
+\nu\|\nabla\u\|^2+\lambda\gamma \|-
\Delta\d+\f_{\varepsilon}(\d)\|^2= 0.
\end{equation}
Following the ideas presented in Sections 3 and 4, one could design a time-splitting scheme for the penalization function (\ref{Pen_func}) which has a priori energy estimates; even though a slightly more complicated arguments must be given.  However, this scheme would lead to a stronger constraint for the space, time and penalty parameters than what we will obtain if we use the following truncated potential function \cite{Guillen-Gutierrez}:
\begin{equation}\label{Truncated-Pontential-fun}
\widetilde F_\varepsilon(\d)=
\begin{cases}
\displaystyle
\frac{1}{4\varepsilon^2}(|\d|^2-1)^2, &\mbox{ if } |\d|\le 1,\\
& \\
\displaystyle
\frac{1}{\varepsilon^2}(|\d|-1)^2, &\mbox{ if } |\d|> 1,
\end{cases}
\end{equation}
for which
$$
\nabla_\d \widetilde F(\d)= 
\widetilde\f_\varepsilon(\d)=
\begin{cases}
\displaystyle
\frac{1}{\varepsilon^2}(|\d|^2-1) \d, &\mbox{ if } |\d|\le 1,\\
& \\
\displaystyle
\frac{2}{\varepsilon^2} (|\d|-1)\frac{\d}{|\d|}, &\mbox{ if } |\d|> 1.
\end{cases}
$$
Therefore, system  \eqref{Ginzburg-Landau-Problem} reminds as
\begin{subequations}\label{Ginzburg-Landau-Problem-Truncation}
\begin{empheq}[left=\empheqlbrace]{align}
\partial_t \d+  \u\cdot \nabla \d+ 
\gamma (\widetilde \f_{\varepsilon}(\d)  -
        \Delta\d ) & = \mathbf{ 0}\quad\mbox{ in  $Q$,}\label{GLPT1}\\
\partial_t\u + \u\cdot \nabla \u - \nu \Delta \u+
        \nabla p + \lambda\nabla\cdot ((\nabla \d)^T \nabla \d)
        & = \mathbf{ 0}\quad\mbox{ in $ Q$,}\label{GLPT2}\\
        \nabla\cdot \,\u & =  0\quad\mbox{ in $ Q$.}\label{GLPT3}
    \end{empheq}
\end{subequations}
The first question to be set out is whether system 
\eqref{Ginzburg-Landau-Problem-Truncation} has a priori estimates 
equivalent to \eqref{Ericksen-Leslie-Energy}. The energy for 
system \eqref{Ginzburg-Landau-Problem-Truncation} is followed 
in the same way we did to obtain the energy law 
\eqref{Ginzburg-Landau-Energy}:
\begin{equation*}
\frac{d}{dt}\bigg(\frac{1}{2}\|\u\|^2+\frac{\lambda}{2}\|\nabla\d
\|^2 + \lambda \int_{\Omega}\widetilde F_{\varepsilon} (\d)\bigg)
+\nu\|\nabla\u\|^2_{\L^2(\Omega)}+\lambda\gamma \|-
\Delta\d+\widetilde\f_{\varepsilon}(\d)\|^2= 0.
\end{equation*}
The second question to be asked is the relationship between 
systems \eqref{Ginzburg-Landau-Problem} and 
\eqref{Ginzburg-Landau-Problem-Truncation}. The following lemma 
clarifies this situation. A detailed proof can be found in \cite{Badia-Guillen-Gutierrez-Rev}.

\begin{lemma} If $|\d_0|\le 1$ a.e. in $\Omega$ holds, then 
$|\d|\le 1$ a.e. in $\Omega$ for both systems 
\eqref{Ginzburg-Landau-Problem} and  
\eqref{Ginzburg-Landau-Problem-Truncation}. Then these systems
are equivalent.
\end{lemma}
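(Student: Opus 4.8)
The plan is to establish a maximum principle for the director field by a Stampacchia-type energy argument on the scalar quantity $\phi := |\d|^2 - 1$, after which the equivalence follows for free. First I would derive the equation governing $|\d|^2$: taking the scalar product of the director equation (either \eqref{GLP1} or \eqref{GLPT1}) with $\d$ and using $\partial_t\d\cdot\d = \tfrac12\partial_t|\d|^2$, $\,(\u\cdot\nabla)\d\cdot\d = \tfrac12\,\u\cdot\nabla|\d|^2$ and $\Delta\d\cdot\d = \tfrac12\Delta|\d|^2 - |\nabla\d|^2$, one obtains the pointwise identity
\begin{equation*}
\tfrac12\partial_t|\d|^2 + \tfrac12\,\u\cdot\nabla|\d|^2 - \tfrac{\gamma}{2}\Delta|\d|^2 + \gamma|\nabla\d|^2 + \gamma\,\f_\varepsilon(\d)\cdot\d = 0
\end{equation*}
for the untruncated system, with $\widetilde\f_\varepsilon(\d)$ in place of $\f_\varepsilon(\d)$ for \eqref{Ginzburg-Landau-Problem-Truncation}. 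Since $\phi$ and $|\d|^2$ differ by a constant, $\phi$ satisfies the same equation.

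Next I would test this identity against $\phi_+ := \max\{\phi,0\}$ and integrate over $\Om$, handling the transport and diffusion terms in turn. The time term gives $\tfrac14\tfrac{d}{dt}\|\phi_+\|^2$. The convective term vanishes: writing $\u\cdot\nabla\phi\,\phi_+ = \u\cdot\nabla(\tfrac12\phi_+^2)$ and integrating by parts, the result is zero because $\nabla\cdot\u = 0$ in $\Om$ and $\u = \mathbf{0}$ on $\partial\Om$. The diffusion term, after integration by parts, yields $\tfrac{\gamma}{2}\|\nabla\phi_+\|^2$ together with a boundary contribution $-\tfrac{\gamma}{2}\int_{\partial\Om}\partial_{\n}\phi\,\phi_+$ that vanishes because the homogeneous Neumann condition $\partial_{\n}\d = \mathbf{0}$ forces $\partial_{\n}|\d|^2 = 2\,\d\cdot\partial_{\n}\d = 0$.

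The crucial sign observation is that on the set $\{\phi_+ > 0\} = \{|\d| > 1\}$ both penalty terms are nonnegative: $\f_\varepsilon(\d)\cdot\d = \tfrac{1}{\varepsilon^2}(|\d|^2-1)|\d|^2 \ge 0$ and $\widetilde\f_\varepsilon(\d)\cdot\d = \tfrac{2}{\varepsilon^2}(|\d|-1)|\d| \ge 0$. Hence both systems are covered by the single estimate
\begin{equation*}
\tfrac14\tfrac{d}{dt}\|\phi_+\|^2 + \tfrac{\gamma}{2}\|\nabla\phi_+\|^2 + \gamma\!\int_\Om |\nabla\d|^2\phi_+ = -\gamma\!\int_\Om \f_\varepsilon(\d)\cdot\d\,\phi_+ \le 0,
\end{equation*}
so that $\tfrac{d}{dt}\|\phi_+\|^2 \le 0$. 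The hypothesis $|\d_0|\le 1$ gives $\phi_+(0) = 0$, whence $\|\phi_+(t)\| = 0$ for all $t$, i.e. $|\d|\le 1$ a.e. in $\Om$, for both systems. For the equivalence, once $|\d|\le 1$ is known, the definitions \eqref{Pen_func} and \eqref{Truncated-Pontential-fun} coincide, so $\widetilde\f_\varepsilon(\d) = \f_\varepsilon(\d)$ and the two systems reduce to the same problem.

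I expect the main obstacle to be rigor rather than the formal computation: a weak solution need not be regular enough to use $\d$ itself as a test function or to give pointwise meaning to $\Delta|\d|^2$ and the product $\Delta\phi\,\phi_+$. To make the argument rigorous one would either regularize, replacing $\phi_+$ by a smooth convex increasing approximation and passing to the limit, or justify the chain rule for $\phi_+\in H^1$ and invoke a Lions--Magenes-type lemma to write $\int_\Om\partial_t\phi\,\phi_+ = \tfrac12\tfrac{d}{dt}\|\phi_+\|^2$; this in turn requires knowing a priori that $\d$ is regular enough (e.g. $\d\in\L^\infty$, or $|\d|^2\in H^1$) for $\phi_+$ to be an admissible test function. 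This is presumably where the detailed argument in the cited reference spends its effort.
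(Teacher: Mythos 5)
Your proof is correct and takes essentially the same route as the paper's source: the paper itself gives no proof of this lemma but defers it to \cite{Badia-Guillen-Gutierrez-Rev}, where the result is established by exactly this maximum-principle argument --- testing the equation satisfied by $|\d|^2-1$ against its positive part, killing the convective term via $\nabla\cdot\u=0$ and $\u=\boldsymbol{0}$ on $\partial\Omega$, killing the boundary term via $\partial_\n\d=\boldsymbol{0}$, and using the signs of $\f_\varepsilon(\d)\cdot\d$ and $\widetilde\f_\varepsilon(\d)\cdot\d$ on the set $\{|\d|>1\}$. The regularity caveat you flag (justifying the chain rule and the use of $(|\d|^2-1)_+$ as a test function for weak solutions) is precisely where the cited reference's detailed argument does its work.
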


The Ginzburg-Landau equations \eqref{Ginzburg-Landau-Problem} 
or \eqref{Ginzburg-Landau-Problem-Truncation}  can be viewed as 
being a regularization of the Ericken-Leslie equations \eqref{Ericksen-Leslie-Problem} since one 
can prove the extra regularity estimate \cite{Becker-Feng-Prohl}:
$$
\int_{0}^T\|\Delta\d(t)\|^2\, dt\le C \varepsilon^{-2}.
$$    
Obviously, such an estimate has no meaning as the penalization 
parameter $\varepsilon$ goes to zero. 

\subsection{ Known results}

We discuss briefly the previous numerical schemes on the Ginzburg-Landau problem.  The first two numerical schemes for problem \eqref{Ginzburg-Landau-Problem} were the works of Liu and Walkington 
\cite{Liu-Walkington-2000, Liu-Walkington-2002}. The former used an implicit Euler time-stepping scheme together with LBB-stable finite elements for velocity and pressure and Hermite bicubic $C^1$ finite elements for director. Nevertheless, the numerical resolution was limited by the number of degrees of freedom per rectangle together with the fact that the performance is not an easy task, due to the set of finite element basis functions that connected derivatives up to second order. The latter used the same time discretization but now took advantage of using the auxiliary variable $\w=\nabla \d$ in order to rule out the complexity of using $C^1$-finite element; even though the number of new unknowns made the algorithm inefficient for large scale simulations due to the amount of computational work involved in the process of resolution. Afterwards came the work of Lin and Liu \cite{Lin-Liu-06} who utilized a semi-explicit Euler  time-stepping algorithm, where the stress tensor $\nabla\cdot ((\nabla\d)^T\nabla\d)$ was explicitly discretized, separating the computation of the velocity and pressure from that of the director. Girault and Guill\'en-Gonz\'alez\cite{Girault-Guillen} introduced the auxiliary variable $\w=-\Delta \d$ in order to design a semi-explicit numerical scheme where the Ginzburg-Landau function was explicitly discretized. It is clear that the use of the Laplacian operator in place of the gradient operator reduced considerable the number of global unknowns. One common features of all these numerical schemes described above is that no discrete energy law equivalent to \eqref{Ginzburg-Landau-Energy} was proven independent of $\varepsilon$.

The only numerical scheme \cite{Becker-Feng-Prohl} that preserved a discrete version of \eqref{Ginzburg-Landau-Energy} made use of the auxiliary variable $\w=-\Delta\d+\f_\varepsilon(\d)$ along with an explicitly time discretization of the linear part of the Ginzburg-Landau function. Following the same ideas as in \cite{Becker-Feng-Prohl}, a semi-explicit Euler time-stepping scheme have been considered in \cite{Guillen-Gutierrez}, but this time taking the time discretization of the truncated Ginzburg-Landau function \eqref{Truncated-Pontential-fun} to be fully explicit. The algorithm presented in this paper is based on that  in \cite{Guillen-Gutierrez} being crucial the fully explicit time integration of the Ginzburg-Landau function. 
  
Recently, in \cite{Badia-Guillen-Gutierrez-SP}, a saddle-point strategy has been suggested for both the Ericksen-Leslie and the Ginzburg-Laundau equations arising numerical algorithms which maintain an energy equality comparable to \eqref{Ginzburg-Landau-Energy}. 
The reader is referred to \cite{Badia-Guillen-Gutierrez-Rev} for a survey of numerical methods on the Ginzburg-Landau approximation. 

\subsection{The main contribution of this paper}
An important observation concerning numerical schemes which embody energy estimates from the original problem is that the time integration couples all the unknowns; therefore, the computational work required to solve a time step makes them intensively expensive for approximating the Ginzburg-Landau equations. Therefore, the difficulty in designing an efficient numerical approximation for the Ginzburg-Landau equations lies in choosing a time discretization that, as well as providing energy estimates independent of the penalization parameter, decouples all the variables being computed. But it is also highly desirable to compute only the primary variables steering clear of any auxiliary variable. Observe that the Ginzburg-Landau equations consist of the Navier-Stokes equations with an extra viscous stress tensor to govern the velocity and the pressure, and a convective harmonic map heat flow equation to govern the dynamics of the director field. 

Projection time-stepping strategies are used in the context of Navier-Stokes as  efficient time integrations. 
The starting point of most projection time-stepping algorithms is Chorin's \cite{Chorin} and Temam's  \cite{Temam-1969}  projection method which consists in decoupling the computation of  the velocity 
field from that of the pressure; in other words, separating the incompressibility constraint from the momentum equation. However, such a strategy needs some elaborations to be applied to the Ginzburg-Landau equations in 
order to segregate the equation for the director field as well. The same difficulty arises in the context of magnetohydrodymanics (MHD) fluids for which  Armero and Simo \cite{Armero-Simo} designed an algorithm which decoupled the computation of the velocity field from the magnetic field. We refer the reader to \cite{Badia-Planas-Gutierrez} where the ideas of Chorin and Temam are combined with the ones of Armero and Simo for the MHD equations. It is in this spirit that the algorithm presented in \cite{minjeaud} is designed for a triphasic Navier-Stokes-Cahn-Hilliard problem, decoupling the Navier-Stokes subproblem from the Cahn-Hilliard one.
 
The goal of this paper is then to extend these types of strategies for developing a numerical scheme for the Ginzburg-Landau equations so that we can decouple the angular, the momentum, and the incompressibility equation.

\section{Finite element approximation}

\subsection{ Preliminaries}

Herein we introduce the hypotheses that will be required along this work.
\begin{enumerate}
\item [(H1)] Let $\Omega$ be bounded domain of $\R^M$ 
 with a polygonal (when $M=2$) or polyhedral (when $M=3$) Lipschitz-continuous boundary.

\item[(H2)] Let $\{{\cal T}_{h}\}_{h>0}$  be a family of quasi-uniform triangulations of 
$\overline{\Om}$ made up of triangles or quadrilaterals in two dimensions 
and tetrahedra or  hexahedra in three dimensions, so that 
$\overline \Omega=\cup_{K\in {\cal T}_h}K$. Further,  let 
${\cal N}_h = \{\a_l\}_{l \in L}$ denote the set of all nodes of ${\cal T}_h$.

\item [(H3)] Conforming finite-element spaces associated with 
${\cal T}_h$ are assumed. In particular, let 
$\mathcal{P}_1(K)$ be
the set of linear polynomials on a finite element $K$. 
Thus the space of continuous, piecewise polynomial 
functions  associated 
with ${\cal T}_h$  is denoted as
$$
X_h = \left\{ v_h \in {C}^0(\overline\Omega) \;:\; 
v_h|_K \in \mathcal{P}_1(K), \  \forall K \in \mathcal{T}_h \right\},
$$
where $\{\phi_i\}_{i=1}^{I}$ stands for its nodal 
basis associated with ${\cal N}_h$. Therefore, 
any element $v_h\in X_h$ can be 
characterized as a vector $V=(V_i)_{i=1}^I 
\in\mathbb{R}^I$ defined as
$$
v_h=\sum_{i=1}^{I} V_i \phi_i.
$$
Moreover, we denote as 
$$
Z_h = \left\{ v_h \in L^{\infty}(\Omega) {\rm \;such
\; that \;} x_h|_K \in \mathcal{P}^1(K) \; \forall K \in
{\cal T}_h \right\}
$$
and
$$
Y_h =\{ w_h \in L^\infty(\Omega) \;:\; w_h|_K \in \mathbb{ R},\ \forall K\in {\cal T}_h\},
$$
where $\{\psi_l = \chi|_{K_l}\}_{l=1}^{L}$ stands for the basis of $Y_h$ associated  with ${\cal T}_h$ and $\chi|_{K_l}$ is the characteristic function of the element $K_l$. Therefore, any element $w_h\in Y_h$ can be  characterized as a vector $W=(W_l)_{l=1}^L \in\mathbb{R}^L$ defined as
$$
w_h=\sum_{l=1}^{L} W_l \psi_l.
$$
The finite-element spaces $\D_h=\X_h$, $\V_h=\X_h\cap\H^1_0(\Omega)$  and  $ P_h=X_h\cap L^2_0(\Omega)$, are used for approximating the director, the velocity and the pressure, respectively.
Additionally,  we select $\W_h=\Y_h$ to be an extra finite-element 
space for an auxiliary variable needed to prove a priori energy estimates for 
Scheme~1 given  below. 
\item [(H4)] Let $0 = t_0 < t_1 < \cdots < t_N = T$ be a uniform partition of the time interval $[0,T]$ so that $k=T/N$ with 
$N\in\mathds{N}$. We suppose that there exist three positive constants $\delta_1$, $\delta_2$ and $\delta_3$, independent of $(h,k,\varepsilon)$,  such that
\begin{equation}\label{Constraint1}
C\frac{k}{h\varepsilon^2}\le  \delta_1,
\end{equation}
\begin{equation}\label{Constraint2}
C \frac{k}{h^{3/2}\varepsilon}\le \delta_2,
\end{equation}
and
\begin{equation}\label{Constraint3}
C\frac{h}{\varepsilon}\le \delta_2,
\end{equation}
where $C>0$ is a constant depending on the data problem but otherwise independent of $(h,k,\varepsilon)$.
\item [(H5)] We suppose that $(\u_0, \d_0)\in \H\times \H^1(\Omega)$ with $|\d_0|=1$ a.e. in $\Omega$.
\end{enumerate} 
\vskip 1cm

Hypothesis $\rm (H3)$ is extremely flexible and allows equal-order 
finite-element spaces for velocity and pressure. Observe that our 
choice of the finite-element spaces for velocity and  pressure does 
not satisfy the discrete inf-sup condition
\begin{equation}\label{LBB-condition}
\|p_h\|_{L^2_0(\Om)}\le \alpha \sup_{\v_h\in\V_h\setminus
\{0\}}\frac{\Big(q_h,\nabla\cdot \v_h\Big)}{\|\v_h\|_{H^1(\Om)}}
\quad \forall\, p_h\in P_h,
\end{equation}
for $\alpha>0$  independent of $h$. 

Some inverse inequalities are established in the next proposition (see \cite{Brenner-Scott}).  
\begin{proposition} Assuming hypotheses $\rm (H1)$-$\rm (H3)$, the following inverse inequalities hold:
\begin{align}
\label{invLinf_L2_disc}
\|\v_h\|_{L^\infty(\Omega)}&\le C_{inv}\,  h^{-3/2} \| \v_h\|, \quad \v_h\in \nabla Z_h,
\\
\label{invLinf_H1}
\| \v_h\|_{L^\infty(\Omega)}&\le C_{inv}\, h^{-1/2} \| \nabla \v_h\|,\quad \v_h\in \V_h,
\end{align}
where $C_{inv}>0$ is a constant independent of $h$. 
\end{proposition}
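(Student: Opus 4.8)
The plan is to reduce both bounds to the classical local scaling argument---on a fixed reference element $\hat K$ all norms on the finite-dimensional polynomial space coincide up to constants, and the affine maps $x = B_K\hat x + b_K$ transfer these equivalences to each $K$ with the powers of $|\det B_K| \simeq h_K^M$ and $\|B_K^{-1}\| \simeq h_K^{-1}$ controlled by the shape-regularity and quasi-uniformity granted by (H2) (see \cite{Brenner-Scott})---and then to assemble the resulting local estimates into global ones. For \eqref{invLinf_L2_disc} I would first note that every $\v_h \in \nabla Z_h$ is piecewise constant, since $Z_h$ consists of piecewise affine functions whose gradient is constant on each $K$. Hence on each element one has the exact identity $\|\v_h\|_{L^\infty(K)} = |K|^{-1/2}\|\v_h\|_{\L^2(K)}$; quasi-uniformity gives $|K| \ge c\,h^M$, and because $M \le 3$ and $h \le 1$ we have $|K|^{-1/2} \le C h^{-M/2} \le C h^{-3/2}$. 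Taking the maximum over $K$ and using $\max_K \|\v_h\|_{\L^2(K)} \le \|\v_h\|$ yields \eqref{invLinf_L2_disc}. This part is essentially immediate.

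For \eqref{invLinf_H1} the delicate point is the \emph{exponent}: the naive route, combining the standard $\L^\infty$--$\L^2$ inverse estimate $\|\v_h\|_{L^\infty} \le C h^{-M/2}\|\v_h\|$ with the Poincar\'e inequality, only gives $h^{-M/2}$, which equals $h^{-3/2}$ in three dimensions and is therefore too crude. To recover the sharp power $h^{-1/2}$ I would pivot through the Lebesgue space $\L^6(\Omega)$. The local scaling argument produces the $\L^\infty$--$\L^6$ inverse estimate $\|\v_h\|_{L^\infty(\Omega)} \le C h^{-M/6}\|\v_h\|_{\L^6(\Omega)}$, whose exponent is exactly $-1/2$ when $M = 3$ and satisfies $h^{-M/6} \le h^{-1/2}$ when $M = 2$. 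I would then apply the continuous Sobolev embedding $\H^1(\Omega) \hookrightarrow \L^6(\Omega)$, valid precisely for $M \le 3$, to get $\|\v_h\|_{\L^6(\Omega)} \le C\|\v_h\|_{\H^1(\Omega)}$, and finally the Poincar\'e inequality on $\V_h \subset \H^1_0(\Omega)$ to replace $\|\v_h\|_{\H^1(\Omega)}$ by $\|\nabla\v_h\|$. Chaining these three bounds gives $\|\v_h\|_{L^\infty(\Omega)} \le C_{inv}\,h^{-1/2}\|\nabla\v_h\|$.

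The main obstacle---really the one insight the proof rests on---is the choice of $\L^6$ as the pivot space: it is the largest Sobolev exponent for which $\H^1 \hookrightarrow \L^p$ still holds in dimension three, and matching this critical exponent to the dimension is exactly what converts the favorable \emph{continuous} embedding into the sharp \emph{discrete} power $h^{-1/2}$; had one settled for $\L^2$ the dimensional exponent $h^{-M/2}$ would reappear. Everything else is routine: the reference-element equivalence of norms, the quasi-uniform control of the affine maps, and the elementary summation $\max_K\|\v_h\|_{\L^p(K)} \le \|\v_h\|_{\L^p(\Omega)}$ used to pass from the local to the global estimate.
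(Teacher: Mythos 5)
Your proof is correct. The paper itself gives no argument for this proposition---it simply cites \cite{Brenner-Scott}---and what you have written is precisely the standard scaling proof contained in that reference: the exact $L^\infty$--$L^2$ identity for piecewise constants under quasi-uniformity for \eqref{invLinf_L2_disc}, and for \eqref{invLinf_H1} the $L^\infty$--$L^6$ inverse estimate chained with the Sobolev embedding $\H^1(\Omega)\hookrightarrow \L^6(\Omega)$ (the critical exponent for $M=3$) and the Poincar\'e inequality on $\V_h\subset\H^1_0(\Omega)$, which is exactly how one obtains the sharp power $h^{-1/2}$ rather than $h^{-M/2}$. So your route coincides with the intended one, and no gap remains (beyond the harmless standing assumption $h\le 1$).
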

 
The following proposition is concerned with an interpolation operator $I_h$ associated with the space $\D_h$. In fact, we can think of $I_h $ as the Scott-Zhang interpolation operator, see \cite{Scott-Zhang}.

\begin{proposition} Assuming hypotheses $\rm (H1)$-$\rm (H3)$, there exists $I_h:\H^1(\Omega)\to \D_h$ an interpolation operator 
 satisfying 
\begin{equation}\label{interp_error_Ih}
\|\d-I_h \d\| \leq C_{int}\, h\|\nabla\d\|\quad
\forall\,\d\in\H^1(\Omega),
\end{equation}
and 
\begin{align}
\label{stabLinf}
\|I_h\d\|_{\L^\infty(\Omega)}\le C_{sta} \|\d\|_{\L^\infty(\Omega)} 
\quad\forall\,\d\in \L^\infty(\Omega),
\\
\label{stabH1}
\|I_h\d\|_{\H^1(\Omega)}\le C_{sta} \|\d\|_{\H^1(\Omega)} \quad\forall\,\d\in \H^1(\Omega),
\end{align}
where $C_{int}>0$ and $C_{sta}>0$ are constants independent of $h$. 
\end{proposition}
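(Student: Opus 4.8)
The plan is to take $I_h$ to be the Scott--Zhang quasi-interpolant of \cite{Scott-Zhang} and to read off the three asserted bounds from its standard local properties. Recall its construction: to each node $\a_l\in\mathcal{N}_h$ one associates a (closed) element or face $\sigma_l$ of $\mathcal{T}_h$ containing $\a_l$, together with a function $\psi_l$ supported on $\sigma_l$ that is $L^2(\sigma_l)$-dual to the restriction of the nodal basis, $\int_{\sigma_l}\psi_l\,\phi_j=\delta_{lj}$; one then sets $I_h\d=\sum_l\big(\int_{\sigma_l}\psi_l\,\d\big)\phi_l$ componentwise. Because the functionals act through integration over (possibly lower-dimensional) pieces, $I_h$ is well defined on $\H^1(\Omega)$ via traces, it reproduces every $v_h\in\X_h$, and in particular it reproduces $\mathcal{P}_1$ locally. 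Since $\D_h=\X_h$ carries no essential boundary condition (the director satisfies a Neumann condition), no special treatment of boundary nodes is needed, and the nodal basis is a nonnegative partition of unity, $\sum_l\phi_l\equiv 1$.

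The heart of the argument is a single local stability estimate obtained by scaling to a reference cell. For each $K\in\mathcal{T}_h$ let $\omega_K$ be the patch formed by the elements meeting $K$. Mapping $K$ and the relevant $\sigma_l$ to a fixed reference configuration, bounding the dual functionals there, and scaling back using the quasi-uniformity of (H2) yields, for $m\in\{0,1\}$,
\[
\|I_h\d\|_{\H^m(K)}\le C\,\|\d\|_{\H^m(\omega_K)},\qquad \|I_h\d\|_{\L^\infty(K)}\le C\,\|\d\|_{\L^\infty(\omega_K)},
\]
with $C$ independent of $h$; the $\L^\infty$ bound is immediate once one notes that $|\int_{\sigma_l}\psi_l\,\d|\le\|\psi_l\|_{L^1(\sigma_l)}\|\d\|_{\L^\infty(\sigma_l)}$ is scale-invariant and that $\sum_l\phi_l\le 1$. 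Because the family is quasi-uniform, each point lies in a uniformly bounded number of patches $\omega_K$, so squaring and summing the $\H^m$ bound over $K$ (with $m=1$) and taking a maximum in the $\L^\infty$ bound deliver \eqref{stabH1} and \eqref{stabLinf}, with $C_{sta}$ depending only on the shape-regularity constants.

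For the approximation estimate \eqref{interp_error_Ih} I would exploit polynomial reproduction. Fix $K$ and let $\overline{\d}$ denote the mean of $\d$ over $\omega_K$. Then $I_h\overline{\d}=\overline{\d}$, so by the triangle inequality and the $m=0$ local stability applied to $\d-\overline{\d}$,
\[
\|\d-I_h\d\|_{\L^2(K)}\le\|\d-\overline{\d}\|_{\L^2(K)}+C\,\|\d-\overline{\d}\|_{\L^2(\omega_K)}\le C\,h\,\|\nabla\d\|_{\L^2(\omega_K)},
\]
where the last step is the scaled Poincar\'e (Bramble--Hilbert) inequality on the connected patch $\omega_K$, valid because $\omega_K$ is a star of diameter $\le Ch$ with uniformly bounded shape by (H1)--(H2). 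Squaring, summing over $K$, and invoking the finite-overlap property once more produces \eqref{interp_error_Ih} with $C_{int}$ independent of $h$.

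The step I expect to be most delicate is the local stability estimate itself: one must track how the dual functionals $\int_{\sigma_l}\psi_l(\cdot)$ transform under the affine (or, for quadrilateral and hexahedral cells, multilinear) pullback and confirm that the constants in the reference bounds are controlled purely by shape-regularity. The mixed-element setting of (H2) means these reference estimates must be verified on each reference shape, but since (H2) posits a single quasi-uniform family drawn from a fixed finite catalogue of reference cells, the resulting constants are uniform. Everything else is bookkeeping through the finite overlap of the patches $\{\omega_K\}$, which is again guaranteed by quasi-uniformity.
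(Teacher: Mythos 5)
Your proof is correct and takes essentially the same route as the paper, which establishes this proposition simply by invoking the Scott--Zhang operator of \cite{Scott-Zhang} without supplying details. Your one refinement---taking all the dual functionals element-based rather than face-based, which is legitimate because $\D_h$ carries no essential boundary condition---is precisely what makes $I_h$ well defined on $\L^\infty(\Omega)$ and locally $\L^2$-stable, so the patch-scaling and finite-overlap bookkeeping yields \eqref{interp_error_Ih}, \eqref{stabLinf} and \eqref{stabH1} as claimed.
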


If $\Pi_0$ is the $L^2$-orthogonal projection operator 
onto $\Y_h$, the following corollary can be deduced from the previous proposition.
\begin{corollary} Assuming hypotheses $\rm (H1)$-$\rm (H3)$, the operator $\Pi_0$ satisfies
\begin{equation}\label{interp_error_Pi0}
\|\d-\Pi_0 \d\| \leq C_{int}\, h\|\nabla\d\|\quad
\forall\,\d\in\H^1(\Omega),
\end{equation}
where $C_{int}$ is a constant independent of $h$.

\end{corollary}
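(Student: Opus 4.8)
The plan is to exploit the defining best-approximation property of the $L^2$-orthogonal projection and reduce the estimate to a purely local computation on each element. Since $\Pi_0$ is the $L^2(\Omega)$-orthogonal projection onto $\Y_h$, for every $w_h\in\Y_h$ we have $\|\d-\Pi_0\d\|\le\|\d-w_h\|$. Moreover, because the basis $\{\psi_l\}_{l=1}^{L}$ of $Y_h$ consists of characteristic functions of the elements $K_l$, which are mutually $L^2$-orthogonal, the global projection decouples over the mesh: on each $K\in\mathcal{T}_h$ the function $(\Pi_0\d)|_K$ is exactly the $L^2(K)$-mean $\bar\d_K=\frac{1}{|K|}\int_K\d\,{\rm d}\x$. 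Consequently $\|\d-\Pi_0\d\|^2=\sum_{K\in\mathcal{T}_h}\|\d-\bar\d_K\|_{L^2(K)}^2$, and it suffices to bound each local error.

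First I would fix an element $K$ and recall that $\bar\d_K$ is the best $L^2(K)$-approximation of $\d$ among constants, so that $\|\d-\bar\d_K\|_{L^2(K)}\le\|\d-\boldsymbol c\|_{L^2(K)}$ for every constant $\boldsymbol c\in\R^M$. Invoking the Poincar\'e--Wirtinger inequality on $K$ (equivalently, a Bramble--Hilbert/scaling argument from the reference element, which is legitimate under the shape-regularity furnished by $\rm(H2)$) then yields the local estimate $\|\d-\bar\d_K\|_{L^2(K)}\le C\,h_K\,\|\nabla\d\|_{L^2(K)}$, with $C$ depending only on the reference element. This is precisely the analogue, at the element level, of the bound underlying the previous proposition for $I_h$.

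To finish, I would square, sum over $K\in\mathcal{T}_h$, and use $h_K\le h$ from the quasi-uniformity in $\rm(H2)$:
\[
\|\d-\Pi_0\d\|^2=\sum_{K\in\mathcal{T}_h}\|\d-\bar\d_K\|_{L^2(K)}^2\le C\,h^2\sum_{K\in\mathcal{T}_h}\|\nabla\d\|_{L^2(K)}^2=C\,h^2\|\nabla\d\|^2,
\]
which is \eqref{interp_error_Pi0} after taking square roots. Since $\Pi_0$ acts componentwise on vector fields, the vector-valued statement for $\d\in\H^1(\Omega)$ follows from the scalar estimate applied to each component.

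The argument involves no genuine obstacle: it is a standard $L^2$-projection error estimate. The only two points requiring care are the identification of $\Pi_0\d$ with the element-wise mean, which hinges on the $L^2$-orthogonality of the piecewise-constant basis $\{\psi_l\}$, and the correct $h$-scaling in the Poincar\'e--Wirtinger inequality, for which the shape-regularity of the family $\{\mathcal{T}_h\}_{h>0}$ in $\rm(H2)$ is essential. An alternative route closer to the wording \emph{deduced from the previous proposition} is to insert $\Pi_0 I_h\d\in\Y_h$ as the competitor in the best-approximation inequality and split $\|\d-\Pi_0 I_h\d\|\le\|\d-I_h\d\|+\|I_h\d-\Pi_0 I_h\d\|$; the first term is controlled by \eqref{interp_error_Ih}, while the second is handled by the same element-wise mean estimate applied to the piecewise-linear function $I_h\d$ together with the local $H^1$-stability of the Scott--Zhang operator $I_h$ (a property sharper than the global bound \eqref{stabH1}). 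I would nonetheless prefer the direct route, since it delivers the clean dependence on $\|\nabla\d\|$ rather than on the full norm $\|\d\|_{\H^1(\Omega)}$.
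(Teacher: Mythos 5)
Your proof is correct, but it is worth noting that the paper never actually proves this corollary: it only remarks that the estimate ``can be deduced from the previous proposition'' about the interpolant $I_h$, with no details given. Your direct route --- identifying $\Pi_0\d$ on each element $K$ with the mean value $\bar\d_K$, invoking the Poincar\'e--Wirtinger inequality with the scaling $h_K$, and summing over the mesh --- is the standard, self-contained argument, and it delivers exactly the stated bound with only $\|\nabla\d\|$ on the right-hand side. It is in fact more careful than the paper's one-line assertion: a literal deduction from the $I_h$ proposition is not immediate, because $I_h\d\in\D_h$ is \emph{not} an element of $\Y_h$, so it cannot serve directly as a competitor in the best-approximation inequality for $\Pi_0$; the patched version you sketch (competitor $\Pi_0 I_h\d$, triangle inequality, element-wise mean estimate for $I_h\d$) works, but as you correctly observe it produces $\|\nabla I_h\d\|$ and hence needs the gradient-only (constant-preserving) stability of the Scott--Zhang operator, which is sharper than the stated bound \eqref{stabH1}; otherwise one only gets $\|\d\|_{\H^1(\Omega)}$ instead of $\|\nabla\d\|$. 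One minor remark: for the elements allowed by $\rm(H2)$ (simplices, and convex quadrilaterals or hexahedra) the Poincar\'e--Wirtinger constant can even be taken as $h_K/\pi$ by the Payne--Weinberger inequality, so convexity alone suffices there; your appeal to shape regularity is a perfectly valid, slightly stronger, sufficient condition.
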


\subsection{The projection time-stepping method}

Next we introduce the ideas that lead to design the numerical 
scheme presented in this work. To obtain a first version of the algorithm for solving \eqref{Ginzburg-Landau-Problem-Truncation}, we split all the differential operator appearing in equation \eqref{GLPT2}.

 
{\sc Scheme 1.}
Let $\u^n, \utilde^n$ and $\d^n$ be given. For $n+1$,
do the following: 
\begin{enumerate}
\item Find $\d^{n+1}$, $\w^{n+1}$,  $\widehat\u^{n+1}$ 
satisfying 
\begin{subequations}\label{scheme1eq1}
\begin{empheq}[left=\empheqlbrace]{align}
\frac{\d^{n+1}-\d^n}{k}+(\widehat\u^{n+1}\cdot\nabla)
\d^n+\gamma\w^{n+1}&=0\mbox{ in }\Omega,\label{scheme1eq1a}\\
-\Delta\d^{n+1}+\widetilde\f_\varepsilon( \d^{n})-\w^{n+1}&=0\mbox{ in }\Omega,\label{scheme1eq1b}\\
\displaystyle
\frac{\widehat\u^{n+1}-\u^{n}}{k}-\lambda (\nabla\d^{n})^T \w^{n+1}&=0\mbox{ in }\Omega,\label{scheme1eq1c}\\
\partial_\n\d^{n+1}&=0\mbox{ on } \partial\Omega.\label{scheme1eq1d}
\end{empheq}
\end{subequations}
\item Find $\widetilde\u^{n+1}$  satisfying
\begin{subequations}\label{scheme1eq2}
\begin{empheq}[left=\empheqlbrace]{align}
\displaystyle
\frac{\widetilde\u^{n+1}-\widehat\u^{n+1}}{k}+(\widetilde\u^n\cdot\nabla)\widetilde\u^{n+1}-\nu \Delta \widetilde\u^{n+1}&=0
\mbox{ in }\Omega,\label{scheme1eq2a}\\
\widetilde\u^{n+1}&=0\mbox{ on } \partial\Omega.
\label{scheme1eq2b}
\end{empheq}
\end{subequations}
\item Find $\u^{n+1}$ and $p^{n+1}$ satisfying 
\begin{subequations}\label{scheme1eq3}
\begin{empheq}[left=\empheqlbrace]{align}
\displaystyle
\frac{\u^{n+1}-\widetilde\u^{n+1}}{k}+\nabla p^{n+1}&=
0\mbox{ in } \Omega,\label{scheme1eq3a}\\
\nabla\cdot\u^{n+1}&=0\mbox{ in }\Omega,\label{scheme1eq3b}\\
\u^{n+1}\cdot\n&=0\mbox{ on } \partial\Omega.\label{scheme1eq3c}
\end{empheq}
\end{subequations}
\end{enumerate}

The time discretization of the nonlinear terms 
is exactly the same as the one implemented in \cite{Guillen-Gutierrez} which is on 
spirit of a linearization.
Observe that $\widehat\u^{n+1}$ depends only on $\u^n$, $\nabla \d^{n}$ 
and $\w^{n+1}$ so we can avoid computing it if we add equation $\eqref{scheme1eq1a}$ and equation 
$\eqref{scheme1eq1c}$. 
Moreover, as usual for developing projection algorithms, the divergence operator is applied to equation 
$\eqref{scheme1eq3a}$ in order to decouple the computations of  $\utilde^{n+1}$ and $p^{n+1}$. Thus Scheme 1 remains as follows. 

{\sc Scheme 2.}
Let $\u^n$ and $\d^n$ be given. For $n+1$,
do the following steps: 
\begin{enumerate}
\item Find $\d^{n+1}$, $\w^{n+1}$ satisfying  
\begin{subequations}\label{scheme2eq1}
\begin{empheq}[left=\empheqlbrace]{align}
\displaystyle
\frac{\d^{n+1}-\d^n}{k}+([\u^{n}+k\lambda (\nabla\d^{n})^T \w^{n+1}]\cdot\nabla)\d^n+
\gamma\w^{n+1}&=0\mbox{ in } \Omega,\label{scheme2eq1a}\\
-\Delta\d^{n+1}+\widetilde \f_\varepsilon(\d^{n})
-\w^{n+1}&=0\mbox{ in }\Omega,
\label{scheme2eq1b}\\
\partial_\n\d^{n+1}&=0\mbox{ on } \partial\Omega.\label{scheme2eq1c}
\end{empheq}
\end{subequations}
\item Find $\widetilde\u^{n+1}$ satisfying 
\begin{subequations}\label{scheme2eq2}
\begin{empheq}[left=\empheqlbrace]{align}
\displaystyle
\frac{\utilde^{n+1}-\u^{n}}{k}+(\utilde^n\cdot\nabla)\utilde^{n+1}-
\nu \Delta \utilde^{n+1}&=\lambda (\nabla\d^{n})^T \w^{n+1}\mbox{ in } \Omega,\label{scheme2eq2a}\\
\utilde^{n+1}&=0\mbox{ on } \partial\Omega.\label{scheme2eq2b}
\end{empheq}
\end{subequations}
\item Find $p^{n+1}$  satisfying     
\begin{subequations}\label{scheme2eq3}
\begin{empheq}[left=\empheqlbrace]{align}
\displaystyle
-\Delta p^{n+1}&=-\displaystyle\frac{1}{k} 
\nabla\cdot\utilde^{n+1}\mbox{ in } \Omega,\label{scheme2eq3a}\\
\partial_\n p^{n+1}&=0\mbox{ on } \partial\Omega.\label{scheme2eq3b}
\end{empheq}
\end{subequations}
\item Compute $\u^{n+1}$ as
\begin{equation}\label{scheme2eq4}
\u^{n+1}=\utilde^{n+1}-k\nabla p^{n+1}\mbox{ in } \Omega.
\end{equation}
\end{enumerate}

It is well to point out at this level that Scheme~2
decouples the computation of the pair 
$(\d^{n+1}, \w^{n+1})$, the intermediate velocity 
$\widetilde\u^{n+1}$, the pressure $p^{n+1}$
 and the end-of-step velocity $\u^{n+1}$. The auxiliary 
 variable $\w^{n+1}$ is totally artificial and is 
 only introduced to help us to prove a priori energy 
 estimates.  This issue will treat in 
 detail in Section \ref{sect:Implementation} when we 
 set up the algebraic version for Scheme 2, once its 
 finite-element counterpart is performed just below.  
 Analogously, the end-of-step velocity $\u^{n+1}$ can 
 be entirely brought out from Scheme 2 by simple 
 computations but it again is desirable to leave it in 
 order to establish energy bounds.         
     
The numerical scheme under consideration is based on 
the weak formulation of Scheme 2 by using the 
finite-element spaces defined in Hypothesis (H3) and a 
spatial stabilization technique in order to use the 
same interpolation for velocity and pressure. Thus 
we have: 
  
{\sc Scheme 3}
 
Let $(\d^{n}_h, \u^{n}_h)\in \D_h\times \V_{h}$ be 
given. For $n+1$, do the following steps: 
\begin{enumerate}
\item Find $(\d^{n+1}_h, \w^{n+1}_h)\in \D_h\times\W_h$ satisfying 
\begin{subequations}\label{scheme3eq1}
\begin{empheq}[left=\empheqlbrace]{align}
\displaystyle \left(\frac{\d^{n+1}_h-\d^n_h}{k},\bar\w_h\right) + 
(( \utecho_h\cdot \nabla)\d^n_h, \bar\w_h)+
\gamma( \w^{n+1}_h, \bar\w_h)=0,\label{scheme3eq1a}\\
(\nabla \d^{n+1}_h, \nabla \bar  \d_h)+
(\widetilde\f_\varepsilon(\d^n_h),\bar \d_h)-
( \w^{n+1}_h, \bar\d_h)=0,\label{scheme3eq1b}
\end{empheq}
\end{subequations}
for all $(\bar\d_h, \bar\w_h)\in \D_h\times\W_h$, where 
\begin{equation}\label{scheme3eq1c}
\utecho_h=\u^n_h+\lambda\, k (\nabla\d_h^n)^T \w^{n+1}_h.
\end{equation}

\item Find $\widetilde\u_n^{n+1}\in \V_h$ satisfying 
\begin{equation}\label{scheme3eq2}
\displaystyle \left(\frac{\utilde^{n+1}_h-\u^n_h}{k},\bar\u_h\right) +
c(\utilde^n_h, \utilde^{n+1}_h,\bar\u_h)
+\nu(\nabla\utilde^{n+1}_h,\nabla\bar\u_h)-
\lambda ((\nabla\d^n_h)^T \w^{n+1}_h, \bar\u_h )= 0, 
\end{equation}
for all $\bar\u_h\in \V_h.$
\item Find $p^{n+1}_h\in  P_{h}$ satisfying
\begin{equation}\label{scheme3eq3a}
k(\nabla p^{n+1}_h, \nabla \bar  p_h)+ j (p^{n+1}_h, \bar p_h)
+(\nabla\cdot\widetilde\u^{n+1}_h,\bar p_h)=0,
\end{equation}
for all $\bar p_h\in P_h$, with
\begin{equation}\label{scheme3eq3b}
j(p^{n+1}_h, \bar p_h)=\left\{
\begin{array}{rcl}
\tau (\nabla p^{n+1}_h-\Pi_1(\nabla p^{n+1}_h), \nabla \bar p_h-
\Pi_1(\nabla \bar p_h)), & \mbox{ with }&\displaystyle \tau=S 
\frac{h^2}{\nu},\\
\\
\tau ( p^{n+1}_h-\Pi_0( p^{n+1}_h), \bar p_h-\Pi_0( \bar p_h)), & 
\mbox{ with }& \displaystyle \tau=S \frac{1}{\nu},
\end{array}
\right.
\end{equation}
where $S$ is an algorithmic constant, and $\Pi_1$ and $\Pi_0$  are the 
$L^2$-orthogonal projection operator onto $\X_h$ and $\Y_h$, respectively.
\item Compute $\u^{n+1}_h\in \V_h+\nabla P_h$ as 
\begin{equation}\label{scheme3eq4}
\u^{n+1}_h=\widetilde\u^{n+1}_h- k \nabla p^{n+1}_h.
\end{equation}
\end{enumerate}

To ensure the skew-symmetric of the trilinear 
convective term in \eqref{scheme3eq2}, we have defined
$$
c(\u_h,\v_h,\w_h)=((\u_h\cdot\nabla\v_h), \w_h)+
\frac{1}{2} (\nabla\cdot \u_h, \v_h\cdot	\w_h)
$$   
for all $\u_h, \v_h, \w_h\in \V_h$. Thus 
$c(\u_h,\v_h,\v_h)=0$ for all $\u_h, \v_h\in \V_h$.  

The idea for the stabilization term $j(\cdot, \cdot )$ 
in \eqref{scheme3eq3b} is to penalize either the difference 
between the pressure gradient and its projection onto the space of piecewise linear, continuous functions or the difference between the pressure and its 
projection onto the space of piecewise constant functions. The former was 
initially proposed in \cite{Codina-Blasco} for the 
Stokes problem and extended later for the Navier-Stokes 
problem in \cite{Codina} together with a projection 
time-stepping method. The latter was instead proposed 
for the Stokes problem in \cite{dohrmann}. 
They both can be motivated since the pressure stability
 provides for the crude projection time-stepping method 
 depends on the time-step size 
$k$ so that lost of pressure stability is expected 
under Hypothesis $\rm {(H4)}$ since $k$ must be 
considerably small. The reader can refer to 
\cite{Burman} for a general description of the 
stabilization technique. 

For suitable initial approximations 
$(\u_{0h}, \d_{0h})$ of $(\u_0, \d_0)$, we  consider 
$\d_{0h}\in\D_h$  such that   
\begin{equation}\label{initial-d0}
\d_{0h}=I_h\d_0
\end{equation}
and  $(\u_{0h},p_{0h})\in \V_h\times P_h$ such that
\begin{subequations}\label{initial-u0}
\begin{empheq}[left=\empheqlbrace]{align}
(\u_{0h}, \bar \u_h)+(\nabla p_{0h}, \bar\u_h )&=(\u_0, \bar\u_h ),
\label{initial-u0eq1}\\
(\nabla\cdot\u_h, \bar p_h)+j( p_{0h}, \bar p_h)&=0,\label{initial-u0eq2}
\end{empheq}
\end{subequations}
for all $(\bar\u_h, \bar p_h)\in \V_h\times P_h$.

In what follows we prove the existence and uniqueness of a solution to Scheme 3. Since Scheme 3 is a finite dimensional system having the same number of unknowns as equations, existence and uniqueness are equivalent. Let $\delta \d_h^{n+1}$ and $\delta\w^{n+1}_h$ denote the difference between two possible solutions to (\ref{scheme3eq1}). It not hard to check that $\delta\d^{n+1}_h$ and $\delta\w^{n+1}_h$ satisfy 
\begin{subequations}\label{scheme3eq1bis}
\begin{empheq}[left=\empheqlbrace]{align}
\displaystyle \frac{1}{k}(\delta\d^{n+1}_h,\bar\w_h) + 
((\nabla\d^n_h)^T\delta\w^{n+1}_h, (\nabla\d^n_h)^T \bar\w_h)+
\gamma( \delta\w^{n+1}_h, \bar\w_h)=0,\label{scheme3eq1abis}
\\
(\nabla \delta\d^{n+1}_h, \nabla \bar  \d_h)-
( \delta\w^{n+1}_h, \bar\d_h)=0,\label{scheme3eq1bbis}
\end{empheq}
\end{subequations}
for all $(\bar\d_h, \bar\w_h)\in \D_h\times\W_h$. Taking $\bar\w_h=\delta\w^{n+1}_h$ and $\bar\d_h=\delta\d^{n+1}_h$ into (\ref{scheme3eq1abis}) and (\ref{scheme3eq1bbis}), respectively, we have 
$$
\frac{1}{k} \|\nabla\delta\d^{n+1}_h\|^2+\|(\gamma+\|(\nabla\d^{n}_h)^T\|^2) \delta\w^{n+1}_h\|^2=0.
$$  
This implies that $\delta\w^{n+1}_h=\boldsymbol{0}$ and $\delta\d^{n+1}_h=\d$ with $\d=(d_1, \cdots, d_M)^T\in \R^M$. As now $\delta \d^{n+1}_h=\d\in \W_h$, we are allowed to take $\bar\w_h=\delta\d_h^{n+1}$ in \eqref{scheme3eq1abis}  to find 
$\|\delta\d^{n+1}_h\|^2=0$. Therefore $\delta\d^{n+1}_h=\boldsymbol{0}$. Thus we have proved uniqueness of a solution to \eqref{scheme3eq1}. Analogously, we can prove the existence and uniqueness of a solution to \eqref{scheme3eq2}, (\ref{scheme3eq3a}), and \eqref{scheme3eq4}.
  
\section{A priori energy estimates}

This section is devoted to proving, by induction on $n$,
a priori energy estimates for Scheme 3. Let us denote 
$$
{\cal E}(\u_h, \d_h)=\frac{1}{2}\|\u_h\|^2+\frac{\lambda}{2} \|\nabla\d_h\|^2+ 
\lambda\int_{\Omega} \widetilde F_{\varepsilon}(\d_h),
$$
for any $(\u_h, \d_h)\in \V_h\times \D_h$. The term  ${\cal E}(\u_h, \d_h)$ represents 
the total energy involving in Scheme~3 which are the kinetic energy $\displaystyle\frac{1}{2}\|\u_h\|^2$, 
the elastic energy $\displaystyle\frac{\lambda}{2}\|\nabla\d_h\|^2$ and the penalty energy 
$\displaystyle\lambda\int_{\Omega} F_\varepsilon(\d_h)$. 

\begin{lemma}\label{le:induction}
Assume that hypotheses $\rm(H1)$-$\rm(H4) $ hold and that there 
exists a constant $C_0>0$,  independent of $h$,
$k$, and $\varepsilon$, such that
\begin{equation}\label{Bound-n}
{\cal E}(\u^n_h, \d^n_h)\leq C_0.
\end{equation}
Then, for $k$, $h$ and $\varepsilon$ small enough (depending on 
$C_0$ and independent of $n$),  the corresponding solution
$(\u^{n+1}_h,\d^{n+1}_h,\w^{n+1}_h)$ to Scheme
3 satisfies the following inequality:
\begin{equation}\label{induction}
\displaystyle
{\cal E}(\u^{n+1}_h,\d^{n+1}_h)-{\cal E}(\u^{n}_h,\d^{n}_h) 
+\frac{k}{2}\left(\nu\|\nabla\widetilde\u_h^{n+1}\|^2 
+\lambda\gamma \|\w^{n+1}_h\|^2\right)\le 0.
\end{equation}

\end{lemma}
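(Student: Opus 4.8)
The plan is to reconstruct the discrete energy $\mathcal{E}(\u^{n+1}_h,\d^{n+1}_h)$ by testing the four steps of Scheme~3 with the natural discrete multipliers and then to absorb the consistency errors produced by the explicit and split terms into the dissipation. Concretely, I would test \eqref{scheme3eq1b} with $\bar\d_h=\frac{\lambda}{k}(\d^{n+1}_h-\d^n_h)$, test \eqref{scheme3eq1a} with $\bar\w_h=\lambda\w^{n+1}_h$, test \eqref{scheme3eq2} with $\bar\u_h=\utilde^{n+1}_h$, and test \eqref{scheme3eq3a} with $\bar p_h=p^{n+1}_h$; finally I would square the projection identity \eqref{scheme3eq4}. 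Each backward difference is handled with $(a,a-b)=\tfrac12(\|a\|^2-\|b\|^2+\|a-b\|^2)$, the trilinear form drops out since $c(\utilde^n_h,\utilde^{n+1}_h,\utilde^{n+1}_h)=0$, and combining \eqref{scheme3eq3a}--\eqref{scheme3eq4} with $j(\cdot,\cdot)\ge0$ yields $\tfrac12\|\u^{n+1}_h\|^2\le\tfrac12\|\utilde^{n+1}_h\|^2$, so the pressure-projection step never increases the kinetic energy and contributes the nonnegative remainders $\tfrac{k^2}{2}\|\nabla p^{n+1}_h\|^2$ and $k\,j(p^{n+1}_h,p^{n+1}_h)$.

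Adding these identities, two structural cancellations should occur. First, the terms $\pm\tfrac{\lambda}{k}(\w^{n+1}_h,\d^{n+1}_h-\d^n_h)$ coming from \eqref{scheme3eq1a}--\eqref{scheme3eq1b} cancel exactly. Second, using the pointwise identity $((\u\cdot\nabla)\d)\cdot\w=((\nabla\d)^T\w)\cdot\u$ and the definition \eqref{scheme3eq1c} of $\utecho_h$, the director-transport term pairs with the elastic-stress coupling $-\lambda((\nabla\d^n_h)^T\w^{n+1}_h,\utilde^{n+1}_h)$ of \eqref{scheme3eq2}, leaving only the cross term $\lambda k((\nabla\d^n_h)^T\w^{n+1}_h,\utilde^{n+1}_h-\u^n_h)$ plus the genuinely nonnegative quantity $\lambda^2k^2\|(\nabla\d^n_h)^T\w^{n+1}_h\|^2$ produced by the $k\lambda$-term inside $\utecho_h$. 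Young's inequality then bounds this cross term by fractions of $\|\utilde^{n+1}_h-\u^n_h\|^2$ and of $\lambda^2k^2\|(\nabla\d^n_h)^T\w^{n+1}_h\|^2$, both already present with positive sign; this is precisely why the splitting is built around $\utecho_h$. For the penalty energy I would Taylor-expand $\widetilde F_\varepsilon$ to second order, so that $(\widetilde\f_\varepsilon(\d^n_h),\d^{n+1}_h-\d^n_h)=\int_\Omega(\widetilde F_\varepsilon(\d^{n+1}_h)-\widetilde F_\varepsilon(\d^n_h))-R$ with $|R|\le\tfrac{C}{\varepsilon^2}\|\d^{n+1}_h-\d^n_h\|^2$; here the truncation \eqref{Truncated-Pontential-fun} is essential, as it is exactly what makes $|D^2\widetilde F_\varepsilon|\le C/\varepsilon^2$ hold uniformly in $\d$.

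The crux is thus to absorb $\tfrac{C\lambda}{\varepsilon^2}\|\d^{n+1}_h-\d^n_h\|^2$ into half of the dissipation $\tfrac{k}{2}(\nu\|\nabla\utilde^{n+1}_h\|^2+\lambda\gamma\|\w^{n+1}_h\|^2)$ together with the leftover nonnegative terms $\tfrac{\lambda}{2}\|\nabla(\d^{n+1}_h-\d^n_h)\|^2$, $\tfrac12\|\utilde^{n+1}_h-\u^n_h\|^2$ and $\lambda^2k^2\|(\nabla\d^n_h)^T\w^{n+1}_h\|^2$. I would split $\d^{n+1}_h-\d^n_h$ through the piecewise-constant projection $\Pi_0$: the part $(\mathrm{Id}-\Pi_0)(\d^{n+1}_h-\d^n_h)$ is controlled by $C_{int}h\|\nabla(\d^{n+1}_h-\d^n_h)\|$ via \eqref{interp_error_Pi0}, while $\|\Pi_0(\d^{n+1}_h-\d^n_h)\|$ is estimated by choosing $\bar\w_h=\Pi_0(\d^{n+1}_h-\d^n_h)\in\W_h$ in \eqref{scheme3eq1a}, which gives $\|\Pi_0(\d^{n+1}_h-\d^n_h)\|\le k(\|(\utecho_h\cdot\nabla)\d^n_h\|+\gamma\|\w^{n+1}_h\|)$. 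Writing $\u^n_h=\utilde^{n+1}_h-(\utilde^{n+1}_h-\u^n_h)$ inside $\utecho_h$ and invoking the inverse inequalities \eqref{invLinf_H1} and \eqref{invLinf_L2_disc} together with the induction hypothesis \eqref{Bound-n} (which bounds $\|\u^n_h\|$ and $\|\nabla\d^n_h\|$, hence $\|\nabla\d^n_h\|_{L^\infty}\le C_{inv}h^{-3/2}\|\nabla\d^n_h\|$ and $\|\utilde^{n+1}_h\|_{L^\infty}\le C_{inv}h^{-1/2}\|\nabla\utilde^{n+1}_h\|$), each resulting piece lands on exactly one of the available nonnegative terms.

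Finally I would track which smallness condition each piece consumes: the $(\mathrm{Id}-\Pi_0)$ contribution is absorbed into $\tfrac{\lambda}{2}\|\nabla(\d^{n+1}_h-\d^n_h)\|^2$ through \eqref{Constraint3}; the $\gamma\|\w^{n+1}_h\|$ contribution into $\tfrac{k}{2}\lambda\gamma\|\w^{n+1}_h\|^2$ through \eqref{Constraint1}; the $\utilde^{n+1}_h$-transport contribution into $\tfrac{k}{2}\nu\|\nabla\utilde^{n+1}_h\|^2$ through \eqref{Constraint1}; and the $(\utilde^{n+1}_h-\u^n_h)$-transport contribution and the $\w^{n+1}_h$-transport contribution (from the $k\lambda$-term in $\utecho_h$) into $\tfrac12\|\utilde^{n+1}_h-\u^n_h\|^2$ and $\lambda^2k^2\|(\nabla\d^n_h)^T\w^{n+1}_h\|^2$ respectively, through \eqref{Constraint2}. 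The main difficulty I anticipate is exactly this bookkeeping: the constants hidden in the inverse inequalities and the $L^\infty$-bounds depend on $C_0$, so the thresholds $\delta_1,\delta_2,\delta_3$ (and then $h,k,\varepsilon$) must be chosen small enough in a mutually compatible way, and the fractions allotted to the cross term and to the several transport pieces must not overspend any single nonnegative term. Once this is arranged, collecting all contributions yields \eqref{induction} and closes the induction step.
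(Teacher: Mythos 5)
Your proposal is correct and follows essentially the same strategy as the paper's proof: the same choice of test functions for the four steps, the second-order Taylor expansion of $\widetilde F_\varepsilon$ with the uniform Hessian bound $C/\varepsilon^2$, the estimate of $\|\d^{n+1}_h-\d^n_h\|$ by splitting through $\Pi_0$ and using equation \eqref{scheme3eq1a}, and the final absorption via the inverse inequalities and the constraints \eqref{Constraint1}--\eqref{Constraint3}. The only differences are cosmetic and algebraically equivalent: the paper eliminates the coupling exactly by substituting $\u^n_h=\utecho_h-\lambda k(\nabla\d^n_h)^T\w^{n+1}_h$ into the velocity step (so no Young's inequality is needed and the nonnegative remainders appear as $\frac12\|\utilde^{n+1}_h-\utecho_h\|^2+\frac12\|\utecho_h-\u^n_h\|^2$), and it bounds $\|\Pi_0(\delta_t\d^{n+1}_h)\|$ by a duality argument rather than by your direct choice $\bar\w_h=\Pi_0(\d^{n+1}_h-\d^n_h)$.
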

\begin{proof} 
By taking $\bar p_h=p^{n+1}_h$ as a test function  into \eqref{scheme3eq3a}
and taking into account \eqref{scheme3eq4},
we have 
\begin{equation*}
j(p^{n+1}_h, p^{n+1}_h)=(\widetilde \u^{n+1}_h-k\nabla p^{n+1}_h, 
\nabla p^{n+1}_h )=(\u^{n+1}_h, \nabla p^{n+1}_h ).
\end{equation*}
Hence, from \eqref{scheme3eq4}, we obtain
\begin{equation*}
\frac{1}{2}\|\widetilde \u^{n+1}_h\|^2 = \frac{1}{2}\|\u^{n+1}_h\|^2+
\frac{k^2}{2}\|\nabla p^{n+1}_h\|^2+k\,j(p^{n+1}_h, p^{n+1}_h).
\end{equation*}
Next, by replacing $\u^{n}_h$, from \eqref{scheme3eq1c}, into  
\eqref{scheme3eq2} and taking $\bar\u_h=2\,k\, \widetilde\u^{n+1}_h$ as a test 
function into \eqref{scheme3eq2}, we obtain 
\begin{equation}\label{aux1}
\displaystyle \frac{1}{2}\|\utilde^{n+1}_h\|^2
-\frac{1}{2}\|\utecho_h\|^2
+\frac{1}{2}\|\utilde_h^{n+1}-\utecho_h\|^2
+\nu\,k \|\nabla \utilde_h^{n+1}\|^2= 0.
\end{equation}
By combining these last two equations, we find
\begin{align}
 \frac{1}{2}\|\u^{n+1}_h\|^2 - \frac{1}{2}\|\utecho_h\|^2
 +\nu\,k \| \nabla\utilde_h^{n+1}\|^2
+\frac{1}{2}\|\utilde_h^{n+1}-\utecho^n_h\|^2
+\frac{k^2}{2}\|\nabla p^{n+1}_h\|^2+k\,j(p^{n+1}_h, p^{n+1}_h)= 0.
\end{align}
Now, if we consider $\bar\w_h=\lambda\,k\,\w^{n+1}_h$ into
\eqref{scheme3eq1a} jointly with $\bar\d_h=\lambda(\d^{n+1}_h-\d^n_h)$ 
into \eqref{scheme3eq1b}, we arrive at
\begin{equation}\label{aux5}
\begin{array}{l}
\displaystyle
\frac{\lambda}{2}\left(\|\nabla\d^{n+1}_h\|^2-\|\nabla\d^n_h\|^2
+\|\nabla(\d^{n+1}_h-\d^{n}_h)\|^2\right)
+\lambda\,\gamma \,k\|\w^{n+1}_h\|^2\\
\hspace{2cm}+\lambda(\d^{n+1}_h-\d^n_h,\widetilde\f_\varepsilon(\d^{n}_h))
+\lambda\, k ((\utecho_h\cdot\nabla)\d^{n}_h,\w^{n+1}_h)=0.
\end{array}
\end{equation}
In view of (\ref{scheme3eq1c}),
a simple calculation shows that
$$
\frac{1}{2}\|\utecho_h\|^2-\frac{1}{2}\|\u^n_h\|^2
+\frac{1}{2}\|\utecho_h-\u^n_h\|^2 
-\lambda\, k ((\nabla\d_h^n)^T \w^{n+1}_h, \utecho_h)=0. 
$$
This equality together with  \eqref{aux1}, \eqref{aux5},  and the fact that
$$
-((\nabla\d^n_h)^T\w^{n+1}_h,\utecho_h)+
((\utecho_h\cdot\nabla)\d^{n}_h,\w^{n+1}_h)=0
$$ 
implies that 
\begin{eqnarray}
\displaystyle\frac{1}{2}\left(\|\u^{n+1}_h\|^2
+\lambda\|\nabla\d_h^{n+1}\|^2\right)
-\frac{1}{2}(\|\u^n_h\|^2+\lambda\|\nabla\d^n_h\|^2 )
+k(\nu \|\nabla\widetilde\u^{n+1}_h\|^2 
+\gamma\lambda\|\w^{n+1}_h\|^2)&& \nonumber\\
\hspace{2cm}\displaystyle+\frac{1}{2}\|\utilde_h^{n+1}-\utecho^n_h\|^2
+\frac{1}{2}\|\utecho_h-\u^n_h\|^2
+\frac{\lambda\, k^2}{2}\|\nabla\delta_t\d^{n+1}\|^2
+\frac{k^2}{2}\|\nabla p^{n+1}_h\|^2&&\label{aux6}\\
\hspace{4cm}+k\,j(p^{n+1}_h, p^{n+1}_h)
+\lambda(\d^{n+1}_h-\d^{n}_h,\widetilde\f_{\varepsilon}(\d^n_h))
&=& 0.\nonumber
\end{eqnarray}
What remains is to control the term 
$\lambda (\d^{n+1}_h-\d^{n}_h,\widetilde \f_{\varepsilon}(\d^n_h))$. 
By using the Taylor polynomial of order $2$ of  
$\widetilde F_\varepsilon$ with respect to $\d^n_h$ 
evaluated at $\d^{n+1}_h$, it gives 
$$
\widetilde F_\varepsilon(\d^{n+1}_h)-\widetilde F_\varepsilon(\d^n_h)=
\nabla_\d \widetilde F_\varepsilon(\d^n_h) (\d_h^{n+1}-\d^n_h)+(\d_h^{n+1}-\d^n_h)^TH_\d \widetilde F_\varepsilon(\d^n_{h,\theta})(\d_h^{n+1}-\d^n_h),
$$
where $\d^n_{h,\theta}=\theta \d^{n+1}_h+(1-\theta)\d^n_h$ for some 
$\theta\in (0,1)$, and  $H_\d\widetilde F_\varepsilon$ stands for the Hessian matrix of
$\widetilde F_\varepsilon$ with 
respect to $\d$. It is important to emphasize that 
$\|H_\d  \widetilde F_\varepsilon(\d^n_{h,\theta})\|_{L^\infty(\Omega)}
\le C/\varepsilon^2$. Thus, integrating over $\Omega$ and inserting 
it  into \eqref{aux1},  we obtain
\begin{eqnarray}
{\cal E}(\u^{n+1}_h, \d^{n+1}_h)- {\cal E}(\u^{n}_h, \d^{n}_h) 
+ k(\nu \| \nabla \utilde_h^{n+1}\|^2 +\lambda\gamma \|\w^{n+1}_h\|^2) 
+\frac{1}{2}\|\utilde_h^{n+1}-\utecho_h\|^2
+\frac{1}{2}\|\utecho_h-\u^n_h\|^2&&\nonumber \\
+\frac{\lambda\, k^2}{2}\|\nabla\delta_t\d^{n+1}_h\|^2
+\frac{k^2}{2}\|\nabla p^{n+1}_h\|^2+k\,j(p^{n+1}_h, p^{n+1}_h) \le C \frac{\lambda}{\varepsilon^2} k^2\|\delta_t\d^{n+1}_h\|^2:=I.&&
\label{interm-estim}
\end{eqnarray}
Next we take $\bar\w_h=\Pi_0(\bar\w)$ as a test function into
\eqref{scheme3eq1a} with $\bar\w\in \L^2(\Omega)$:
\begin{equation}\label{aux7}
\left(\delta_t\d^{n+1}_h, \Pi_0( \bar\w)\right)=
-((\utecho_h\cdot\nabla)\d^{n}_h, \Pi_0(\bar\w))-\gamma(\w^{n+1}_h, \bar\w).
\end{equation} 
Note that $\Pi_0$ has been neglected from the second term on the right-hand side of 
 \eqref{aux7}. Using \eqref{Bound-n}, which implies 
$\frac{\lambda}{2}\|\nabla\d^n_h\|^2\le C_0$, we  estimate the right-hand 
side of \eqref{aux7}  as
$$
((\utecho_h\cdot\nabla)\d^{n}_h, \Pi_0(\bar\w))\le 
\|\utecho_h\|_{\L^\infty(\Omega)}\|\nabla\d^n_h\| \| \Pi_0(\bar\w)\|\le 
\sqrt{\frac{2C_0}{\lambda}}\|\utecho_h^n\|_{\L^\infty(\Omega)} \|\bar\w\| 
$$
and
$$
(\w^{n+1}_h, \bar\w)\le \|\w^{n+1}_h\|\|\bar\w\|.
$$
We find, after applying a duality argument, that   
$$
\| \Pi_0 (\delta_t\d^{n+1}_h)\|
\le C (\|\utecho_h\|_{\L^\infty(\Omega)} +\gamma\|\w^{n+1}_h\|
). 
$$
The triangle inequality together with (\ref{interp_error_Pi0})  gives 
$$
\begin{array}{rcl}
\|\delta_t\d^{n+1}_h\|&\le& \|\Pi_0(\delta_t\d^{n+1}_h)\|+\|\delta_t\d^{n+1}_h-\Pi_0(\delta_t\d^{n+1}_h)\|
\\
&\le & C (\|\utecho_h\|_{\L^\infty(\Omega)} +\gamma\|\w^{n+1}_h\|)+h\|\nabla(\delta_t\d^{n+1}_h)\|.
\end{array}
$$
Therefore, the bound of $I$ remains as
$$
I\le C\frac{\lambda}{\varepsilon^2}k^2 \left(
\|\utecho_h\|^2_{\L^\infty(\Omega)} +\gamma \|\w^{n+1}_h\|^2\right)+C \lambda \frac{k^2h^2}{\varepsilon^2}\|\nabla\delta_t\d^{n+1}\|^2.
$$ 
The triangle inequality gives    
$$
\|\utecho_h\|_{\L^\infty(\Omega)}\le\|\utilde^{n+1}_h\|_{\L^\infty(\Omega)}
+ \|\utilde^{n+1}_h-\utecho_h\|_{\L^\infty(\Omega)},
$$
which combined with the two inverse inequalities \eqref{invLinf_L2_disc} and \eqref{invLinf_H1}  provides
$$
\|\utecho_h
\|_{\L^\infty(\Omega)}\le \frac{ C}{ h^{1/2}}  \|\nabla
\utilde^{n+1}_h\|+ \frac{C}{h^{3/2}} \|\utilde^{n+1}_h-\utecho_h\|.
$$
Finally, $I$ remains bounded as
$$
I\le C\frac{k^2}{\varepsilon^2}  \left( \frac{1}{h} 
\|\nabla \utilde^{n+1}_h\|^2+ \frac{1}{h^{3}}\|\utilde^{n+1}_h-\utecho_h\|^2
+\lambda\,\gamma \|\w^{n+1}_h\|^2\right)+C \lambda\frac{k^2h^2}{\varepsilon^2}\|\nabla\delta_t\d^{n+1}_h\|^2.
$$ 
Adjusting the constants $\delta_1$ and $\delta_1$ from constraints \eqref{Constraint1} and \eqref{Constraint2}, we get
$$
I\le \frac{k}{2} \left(\nu \|\nabla\utilde^{n+1}_h\|^2
+\lambda\,\gamma\,\|\w^{n+1}_h\|^2\right)
+ \frac{1}{2}\|\utilde^{n+1}_h-\utecho_h\|^2+ \lambda\frac{k ^2}{4}\|\nabla\delta_t\d^{n+1}_h\|^2.
$$
To conclude, we  obtain inequality \eqref{induction} 
by using the above estimate in \eqref{interm-estim}.
\end{proof}

In order to initialize the induction argument on $n$ we need to insure the existence of $C_0>0$ in the requirement \eqref{Bound-n} for $n=0$.
\begin{lemma}\label{le:initial-bound}
 Assuming hypotheses $\rm (H1)$-$\rm (H5)$, then there exists a constant $C_0>0$, independent of $h$, $k$, and $\varepsilon$, such that 
\begin{equation}\label{initial-bound}
{\cal E}(\u_{0h}, \d_{0h})\leq C_0
\end{equation}
for the initial approximations $(\u_{0h}, \d_{0h})$ defined in 
\eqref{initial-d0} and \eqref{initial-u0}.
\end{lemma}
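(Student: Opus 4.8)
The plan is to estimate the three contributions to $\mathcal{E}(\u_{0h},\d_{0h})$ separately, each by a bound depending only on the data $\u_0$ and $\d_0$ through (H5). Writing
$$
\mathcal{E}(\u_{0h},\d_{0h})=\frac{1}{2}\|\u_{0h}\|^2+\frac{\lambda}{2}\|\nabla\d_{0h}\|^2+\lambda\int_\Omega\widetilde F_\varepsilon(\d_{0h}),
$$
I would treat the kinetic, elastic and penalty pieces in turn, the last being the only delicate one.

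For the kinetic energy I would test the defining system \eqref{initial-u0} with its own solution: taking $\bar\u_h=\u_{0h}$ in \eqref{initial-u0eq1} and $\bar p_h=p_{0h}$ in \eqref{initial-u0eq2} gives
$$
\|\u_{0h}\|^2+(\nabla p_{0h},\u_{0h})=(\u_0,\u_{0h}),\qquad (\nabla\cdot\u_{0h},p_{0h})+j(p_{0h},p_{0h})=0.
$$
Since $\u_{0h}\in\V_h\subset\H^1_0(\Omega)$, integration by parts yields $(\nabla p_{0h},\u_{0h})=-(p_{0h},\nabla\cdot\u_{0h})=j(p_{0h},p_{0h})$, so that
$$
\|\u_{0h}\|^2+j(p_{0h},p_{0h})=(\u_0,\u_{0h})\le\tfrac{1}{2}\|\u_0\|^2+\tfrac{1}{2}\|\u_{0h}\|^2.
$$
Because $j(\cdot,\cdot)$ is positive semi-definite, this gives $\|\u_{0h}\|^2\le\|\u_0\|^2$, so the kinetic energy is bounded by $\tfrac12\|\u_0\|^2$, which is finite by (H5).

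For the elastic energy, $\d_{0h}=I_h\d_0$ and the $\H^1$-stability \eqref{stabH1} of the Scott–Zhang operator immediately gives $\|\nabla\d_{0h}\|\le\|I_h\d_0\|_{\H^1(\Omega)}\le C_{sta}\|\d_0\|_{\H^1(\Omega)}$, again bounded by (H5). The penalty term is where the $\varepsilon^{-2}$ factor in $\widetilde F_\varepsilon$ makes things delicate, and the crucial ingredient is that $|\d_0|=1$ a.e.\ from (H5). From the definition \eqref{Truncated-Pontential-fun} one checks the pointwise bound $\widetilde F_\varepsilon(\d)\le\varepsilon^{-2}\big||\d|-1\big|^2$ (for $|\d|\le 1$ one uses $(|\d|+1)^2\le 4$; for $|\d|>1$ it is an equality). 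Applying this with $\d=\d_{0h}$ and using $|\d_0|=1$ together with the reverse triangle inequality $\big||\d_{0h}|-1\big|=\big||I_h\d_0|-|\d_0|\big|\le|I_h\d_0-\d_0|$ gives pointwise $\widetilde F_\varepsilon(\d_{0h})\le\varepsilon^{-2}|I_h\d_0-\d_0|^2$. Integrating and invoking the interpolation error estimate \eqref{interp_error_Ih} yields
$$
\int_\Omega\widetilde F_\varepsilon(\d_{0h})\le\frac{1}{\varepsilon^2}\|I_h\d_0-\d_0\|^2\le C_{int}^2\,\frac{h^2}{\varepsilon^2}\|\nabla\d_0\|^2,
$$
and finally constraint \eqref{Constraint3} bounds $h/\varepsilon$, hence $h^2/\varepsilon^2$, by a fixed constant.

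The main obstacle is precisely this penalty contribution: naively $\widetilde F_\varepsilon$ scales like $\varepsilon^{-2}$, and the whole point is to recognize that the unit-length property of $\d_0$ converts $\widetilde F_\varepsilon(\d_{0h})$ into an interpolation error whose $O(h^2)$ size compensates the $\varepsilon^{-2}$ weight under the compatibility condition (H4). Collecting the three bounds then produces a constant $C_0$ depending only on $\|\u_0\|$, $\|\d_0\|_{\H^1(\Omega)}$ and the fixed constants $C_{sta}$, $C_{int}$, $\delta_2$, which establishes \eqref{initial-bound}.
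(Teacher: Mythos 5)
Your proposal is correct, and its overall architecture is the same as the paper's: the kinetic term by testing \eqref{initial-u0} with $(\u_{0h},p_{0h})$ (your computation reproduces the paper's bound \eqref{aux8} exactly), the elastic term by the $\H^1$-stability \eqref{stabH1}, and the penalty term converted into an interpolation error that \eqref{Constraint3} absorbs. The one genuine difference is how the penalty term is estimated. The paper first uses $\widetilde F_\varepsilon\le F_\varepsilon$, then writes $|\d_{0h}|^2-1=|\d_{0h}|^2-|\d_0|^2$ and factors it as a product involving $\d_{0h}+\d_0$, which forces it to control $\|\d_{0h}+\d_0\|_{\L^\infty(\Omega)}$ via the $\L^\infty$-stability \eqref{stabLinf} of $I_h$ before applying \eqref{interp_error_Ih}. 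You instead exploit the truncation \eqref{Truncated-Pontential-fun} directly through the pointwise bound $\widetilde F_\varepsilon(\d)\le\varepsilon^{-2}\big(|\d|-1\big)^2$ — valid because $(|\d|+1)^2\le 4$ on $\{|\d|\le 1\}$ and the bound is an identity on $\{|\d|>1\}$ — and then use the reverse triangle inequality together with $|\d_0|=1$. This is slightly more economical: it never invokes \eqref{stabLinf}, so the $\L^\infty$-stability of the interpolant is not needed for this lemma, and the constant in front of $h^2/\varepsilon^2$ is cleaner. What the paper's detour buys in exchange is that its argument applies verbatim to the untruncated potential $F_\varepsilon$ (the route followed in the cited reference), whereas your pointwise bound is specific to the truncated $\widetilde F_\varepsilon$. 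Both proofs then conclude identically with \eqref{interp_error_Ih} and \eqref{Constraint3}, matching \eqref{aux10}.
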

\begin{proof} We take $\bar\u=\u_{0h}$ and $\bar p_h=p_{0h}$ as 
test functions into \eqref{initial-u0} to obtain
\begin{equation}\label{aux8}
\frac{1}{2}\|\u_{0h}\|^2+j(p_{0h}, p_{0h})\le \frac{1}{2}\|\u_0\|^2.
\end{equation}

Moreover, from \eqref{stabH1}, we have
\begin{equation}\label{aux9}
\|\d_{0h}\|_{\H^1(\Omega)}\le \|\d_0\|_{\H^1(\Omega)}.
\end{equation}
Observe that $\widetilde F_\varepsilon(\d)\le F_\varepsilon(\d)$ holds. Now, we bound as in \cite{Guillen-Gutierrez}, 
\begin{eqnarray}
\displaystyle
\int_{\Omega}\widetilde F_\varepsilon(\d_{0h})
\le \int_{\Omega} F_\varepsilon(\d_{0h})&\le&
\displaystyle \frac{1}{ \varepsilon^2}\int_{\Omega} (|\d_{0h}|^2-|\d_0|^2)^2
=\frac{1}{ \varepsilon^2}\int_{\Omega}(|\d_{0h}+\d_0||\d_{0h}-\d_{0}|)^2
\nonumber\\
&\le & \displaystyle \frac{1}{ \varepsilon^2}\|\d_{0h}
+\d_0\|^2_{\L^\infty(\Omega)} \|\d_{0h}-\d_0\|^2\le 
C \frac{h^2}{\varepsilon^2}\|\d_0\|^2_{\H^1(\Omega)}, \label{aux10}
\end{eqnarray}
where \eqref{stabLinf}, \eqref{interp_error_Ih} 
and  constraint \eqref{Constraint3} has been applied. Combining (\ref{aux10}) with (\ref{aux8}) and (\ref{aux9}), we obtain \eqref{initial-bound}.
\end{proof}

We are now ready to prove the a priori global-in-time energy estimates 
for Scheme 3.
\begin{theorem}\label{Th:bounds}
Assume that $\rm (H1)$-$\rm (H5)$ hold. Then  there exist $h_0$, $k_0$, and $\varepsilon_0$ small enough so that the corresponding solution
to Scheme~3 satisfies  the following global-in-time discrete energy inequality: 
\begin{equation}\label{global-estimate}
\max_{r\in\{0,\cdots, N-1\}}\left\{{\cal E}(\u^{r+1}_h,\d^{r+1}_h, p_h^{r+1})
 +\frac{k}{2}\sum_{n=0}^r \left(\nu
\|\nabla\widetilde\u_h^{n+1}\|^2 +\lambda\gamma \|\w^{n+1}_h\|^2\right)\right\}
\le {\cal E}(\u_{0h},\d_{0h}, p_{0h}).
\end{equation}
\end{theorem}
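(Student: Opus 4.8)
The plan is to establish \eqref{global-estimate} by induction on $n$, using Lemma~\ref{le:induction} for the inductive step and Lemma~\ref{le:initial-bound} to start the induction. First I would fix the constant $C_0$ to be the value of the initial energy, i.e.\ set $C_0:={\cal E}(\u_{0h},\d_{0h})$; by Lemma~\ref{le:initial-bound} this $C_0$ is finite and independent of $h$, $k$, and $\varepsilon$. With $C_0$ fixed once and for all, I would then take $h_0$, $k_0$, and $\varepsilon_0$ to be precisely the smallness thresholds furnished by Lemma~\ref{le:induction} for this particular $C_0$. The key point here is that those thresholds depend only on $C_0$ and are asserted to be independent of $n$, so a single choice works simultaneously at every time level.

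Next I would run the induction itself. The base case $n=0$ is exactly \eqref{Bound-n} with $n=0$, which is the content of \eqref{initial-bound}. For the inductive step, assume ${\cal E}(\u^n_h,\d^n_h)\le C_0$. Since $h\le h_0$, $k\le k_0$, and $\varepsilon\le\varepsilon_0$, the hypotheses of Lemma~\ref{le:induction} are met, so the solution at level $n+1$ obeys \eqref{induction}. Because the two terms $\nu\|\nabla\widetilde\u_h^{n+1}\|^2$ and $\lambda\gamma\|\w^{n+1}_h\|^2$ are nonnegative, \eqref{induction} immediately yields ${\cal E}(\u^{n+1}_h,\d^{n+1}_h)\le{\cal E}(\u^n_h,\d^n_h)\le C_0$, which closes the induction and establishes \eqref{Bound-n} for all $n$. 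I want to stress that it is the monotone nonincrease of the discrete energy, and not merely a bound that might otherwise accumulate over time steps, that keeps $C_0$ frozen at its initial value; this is precisely what lets the $n$-independent thresholds of Lemma~\ref{le:induction} be applied uniformly.

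Having \eqref{induction} available at every level $n=0,1,\dots,N-1$, I would finish by telescoping. Summing \eqref{induction} over $n=0,\dots,r$ for a fixed $r\in\{0,\dots,N-1\}$ collapses the energy differences and gives
\begin{equation*}
{\cal E}(\u^{r+1}_h,\d^{r+1}_h)+\frac{k}{2}\sum_{n=0}^{r}\left(\nu\|\nabla\widetilde\u_h^{n+1}\|^2+\lambda\gamma\|\w^{n+1}_h\|^2\right)\le{\cal E}(\u_{0h},\d_{0h}).
\end{equation*}
Since the right-hand side is the fixed initial energy and does not depend on $r$, each of these inequalities is bounded by the same quantity, and taking the maximum over $r\in\{0,\dots,N-1\}$ produces exactly \eqref{global-estimate}.

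I do not expect any serious obstacle, since the two lemmas have done the analytical work; the only point requiring care is the one already highlighted, namely that the smallness of $h$, $k$, $\varepsilon$ can be chosen uniformly in $n$. This hinges on the energy being nonincreasing, so that the same $C_0$ bounds every iterate, a feature built directly into \eqref{induction}. A minor cosmetic matter is the appearance of the pressure argument $p_h^{r+1}$ in ${\cal E}(\cdot,\cdot,\cdot)$ in \eqref{global-estimate}: the functional ${\cal E}$ as defined depends only on $(\u_h,\d_h)$, so I would read this third slot as notational and simply track ${\cal E}(\u_h,\d_h)$ throughout.
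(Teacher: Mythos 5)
Your proposal is correct and follows essentially the same route as the paper: an induction powered by Lemma~\ref{le:induction}, initialized by Lemma~\ref{le:initial-bound}, with the dissipation terms telescoped and the maximum over $r$ taken at the end. The only point to adjust is your choice $C_0:={\cal E}(\u_{0h},\d_{0h})$ — this quantity itself depends on $h$ and $\varepsilon$ through the discrete initial data, so you should instead take $C_0$ to be the uniform constant furnished by Lemma~\ref{le:initial-bound} and use the chain ${\cal E}(\u^n_h,\d^n_h)\le{\cal E}(\u_{0h},\d_{0h})\le C_0$, exactly as the paper does.
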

\begin{proof} For $r=0$, inequality \eqref{global-estimate} holds 
from \eqref{induction} in Lemma \ref{le:induction} since  \eqref{Bound-n} 
holds by Lemma \ref{le:initial-bound}.  Next we suppose that inequality 
\eqref{global-estimate} holds for $r=n-1$. Then we have that 
$$
{\cal E}(\u_h^n,\d^n_h)\le {\cal E}(\u_{0h},\d_{0h} )\le C_0.
$$
Thus, inequality \eqref{global-estimate} holds for $r=n$ 
from \eqref{induction} in Lemma \ref{le:induction}.      
\end{proof}

\section{Implementation strategy}
\label{sect:Implementation}
Our goal of this section is to discuss some efficient numerical implementations of Scheme~3. Basically, we want to avoid computing the end-of-step velocity $\utilde_h^{n}$ \cite{Guermond_1996, Guermond-Miner-Shen} and the auxiliary director variable $\w^{n+1}_h$.  The alternative realization of Scheme 3 will be carried out in three steps as follows.    

Firstly, Scheme~3 is rewritten in terms of the intermediate velocity $\utilde_h^{n}$ only by eliminating the end-of-step velocity $\u_h^n$, by means of equation \eqref{scheme3eq4}. Thus we arrive at the following modified version of Scheme 3:

{\sc Scheme 4.}

Let $(\d^{n}_h, \utilde^{n}_h)\in \D_h\times \V_{h}$ be given. For $n+1$, do the following steps: 
\begin{enumerate}
\item Find $(\d^{n+1}_h, \w^{n+1}_h)\in \D_h\times\W_h$ 
satisfying 
\begin{subequations}\label{scheme4eq1}
\begin{empheq}[left=\empheqlbrace]{align}
\displaystyle \left(\frac{\d^{n+1}_h-\d^n_h}{k},\bar\w_h\right) + 
\lambda\, k\,( ( (\nabla\d_h^n)^T \w^{n+1}_h
\cdot \nabla)\d^n_h, \bar\w_h)\nonumber \\
+
\gamma( \w^{n+1}_h, \bar\w_h)=-(((\utilde^n_h-k\nabla p_h^n)\cdot \nabla)\d^n_h, \bar\w_h ),\label{scheme4eq1a}\\
(\nabla \d^{n+1}_h, \nabla \bar  \d_h)+( \w^{n+1}_h, \bar\d_h)=-(\widetilde\f_\varepsilon(\d^n_h),\bar \d_h),\label{scheme4eq1b}
\end{empheq}
\end{subequations}
for all $(\bar\d_h, \bar\w_h)\in \D_h\times\W_h$.

\item Find $\widetilde\u_n^{n+1}\in \V_h$ satisfying 
\begin{equation}\label{scheme4eq2}
\displaystyle \left(\frac{\utilde^{n+1}_h-\utilde^n_h}{k},\bar\u_h\right) +
c(\widetilde\u^n_h, \widetilde\u^{n+1}_h,\bar\u_h)
+\nu(\nabla\utilde^{n+1}_h,\nabla\bar\u_h)
=-(\nabla p_h^n, \bar\u_h )+
\lambda ((\nabla\d^n_h)^T \w^{n+1}_h, \bar\u_h ), 
\end{equation}
for all $\bar\u_h\in \V_h.$ Due to the semi-explicit treatment of the convective term, each velocity component can be computed in a parallel machine. Thus, a large amount of computer memory and time can be saved.
\item Find $p^{n+1}_h\in  P_{h}$ satisfying
\begin{equation}\label{scheme4eq3a}
k(\nabla p^{n+1}_h, \nabla \bar  p_h)+ j (p^{n+1}_h, \bar p_h)
=-(\nabla\cdot\widetilde\u^{n+1}_h,\bar p_h),
\end{equation}
for all $\bar p_h\in P_h$, where 
$j(p_h, \bar p_h)$ is defined in \eqref{scheme3eq3b}.
\end{enumerate}

Secondly, let $N_u = \dim(\V_h)$, $N_p = \dim(P_h)$, $N_d=\dim(\D_h)$, and $N_w=\dim(\W_h)$ and let $\{\phi^u_i\}_{i=1}^{N_u}$, $\{\phi^p_i\}_{i=1}^{N_p}$, $\{\phi^d_i\}_{i=1}^{N_d}$, $\{\psi^w_i\}_{i=1}^{N_w}$ be finite-element bases  for $\V_h$, $P_h$, $\D_h$, and $\W_h$,  respectively, constructed from $\{\phi_i\}_{i=1}^{I}$ and $\{\psi_l\}_{l=1}^{L}$ for $X_h$ and $Y_h$, respectively, given in $\rm( H3)$. Thus, define the following matrices. 
For $\W_h$, we have: 
$$
\Mmat_{d,w}=
\left(\int_\Omega \phi_i^d\cdot\phi_j^w\right),\quad
\displaystyle\Cmat_{w}=
\left(\int_\Omega ((\nabla\d_h^n)^T 
\phi_i^w\cdot\nabla)\d^n_h\cdot\phi_j^w\right),
\quad
\Mmat_{w}=
\left(\int_\Omega \phi_i^w\cdot\phi_j^w\right).
$$For $\D_h$, we have:
$$
\Mmat_{w,d}=
\left(\int_\Omega \phi_i^w\cdot\phi_j^d\right),\quad
\Lmat_d=
\left(\int_\Omega \nabla\phi_i^d\cdot\nabla\phi_j^d\right).
$$
For $\V_h$, we have:
$$
\Mmat_u=
\left(\int_\Omega \phi_i^u\cdot\phi_j^u\right), \quad\Lmat_u=
\left(\int_\Omega \nabla\phi_i^u\cdot\nabla\phi_j^u\right),
\quad
\Emat_{u}=
\left(\int_\Omega (\nabla\d_h^n)^T\psi_i^w\cdot\phi_j^u\right),
$$
$$
\Cmat_{u}=
\left(\int_\Omega (\u_h^n\cdot\nabla)\phi_i^u\cdot\phi_j^u+
\frac{1}{2}\int_\Omega (\nabla\cdot\u_h^n)\phi_i^u\cdot\phi_j^u\right).
$$
For $P_h$, we have:
\begin{equation*}
\displaystyle\Lmat_{p}=
\left(\int_\Omega \nabla\phi_i^p\cdot\nabla\phi_j^p\right), \quad \Jmat_p=j(\phi_i^p, \phi_j^p).
\end{equation*}
Moreover, let us denote by $\Wvec \in \mathbb{R}^{N_w}$, $ \Dvec \in \mathbb{R}^{N_d}$, $\widetilde\Uvec \in \mathbb{R}^{N_u}$, and $ \Pvec \in \mathbb{R}^{N_p}$
the coordinate vectors of the finite-element functions $\w \in \W_h$,  $\d \in \D_h$, $\u\in \V_h$, and $p\in P_h$, respectively.

By using these ingredients, we can write the matrix form of Scheme~4 as follows.

{\sc Matrix version of Scheme~4}

Let  $\Dvec^{n}\in \R^{N_d}$ and $\widetilde\Uvec^{n}\in\R^{N_u}$ be given. For $n+1$,  compute the following steps:
\begin{enumerate}
\item Find $\Dvec^{n+1}\in \R^{N_d}$ and $ \Wvec^{n+1}\in\R^{N_w}$ 
by solving 
\begin{subequations}\label{scheme5eq1}
\begin{empheq}[left=\empheqlbrace]{align}
\displaystyle \frac{1}{k}\Mmat_{d,w}\Dvec^{n+1}+ 
\left(\lambda k\Cmat_{w} +\gamma\Mmat_w\right)\Wvec^{n+1}&=\frac{1}{k}\Mmat_{d,w}\Dvec^{n} -\Fvec_{w},\label{scheme5eq1a}\\
-\Lmat_d\Dvec^{n+1}+\Mmat_{w,d}\Wvec^{n+1}&=\Fvec_\varepsilon,
\label{scheme5eq1b}
\end{empheq}
\end{subequations}
where $\displaystyle\Fvec_{w}\in \R^{N_d}$ and $\Fvec_\varepsilon\in \R^{N_w}$ defined, respectively, as
\begin{equation}\label{scheme5eq1c}
\displaystyle\Fvec_{w}=
\left(\int_\Omega ((\utilde_h^n
-k\nabla p_h^n)\cdot\nabla)\d^n_h\cdot\psi_j^w\right),\quad\mbox{ and }\quad
\displaystyle\Fvec_\varepsilon=
\left(\int_\Omega \widetilde\f_\varepsilon(\d^n_h)\cdot\phi_j^d\right). 
\end{equation}

\item Find $\widetilde\Uvec^{n+1}\in\R^{N_u}$ 
by solving
\begin{equation}\label{scheme5eq2}
\begin{array}{l}
\displaystyle \left(
\frac{1}{k}\Mmat_u
+\Cmat_{u} +\nu\Lmat_u\right)\widetilde\Uvec^{n+1}
=\frac{1}{k}\Mmat\widetilde\Uvec^{n}-\Fvec_{u}
+\lambda \Emat_{u}\Wvec^{n+1},
\end{array}
\end{equation}
where $\displaystyle\Fvec_{u}\in \R^{N_u}$ is
defined as
\begin{equation}\label{scheme5eq2b}
\displaystyle\Fvec_{u}=
\left(\int_\Omega \nabla p_h^n\cdot\phi_j^u\right).
\end{equation}

\item Find $\Pvec^{n+1}\in\R^{N_p}$ by solving
\begin{equation}\label{scheme5eq3}
(\Lmat_{p}+\Jmat_p)\Pvec^{n+1}=-\frac{1}{k}\Fvec_{p},
\end{equation}
where $\displaystyle\Fvec_{\Pvec}\in\R^{N_p}$ is
defined as
\begin{equation}\label{scheme5eq3b}
\displaystyle\Fvec_{p}=
\left(\int_\Omega (\nabla \cdot \utilde_h^{n+1})\phi_j^p\right).
\end{equation}
\end{enumerate}

From  \eqref{scheme5eq1a}, we have
\begin{equation*}
\displaystyle \Wvec^{n+1}=
\Emat_w^{-1} \left[
\frac{1}{k}\Mmat_{d,w}\left(\Dvec^{n}-\Dvec^{n+1}\right)
-\Fvec_{w}\right],
\end{equation*}
where $\Emat_w=\lambda k\Cmat_{w} +\gamma\Mmat_{w}$ is a block-diagonal, 2-by-2 matrix, which is easy to invert by using  block Gauss-Jordan elimination.
If now we replace the above equality in equations  \eqref{scheme5eq1b} and \eqref{scheme5eq2}, and after some simple calculations, the resulting algorithm reads as:

{\sc Scheme 5: Simplified version of Scheme 4.}
\begin{enumerate}
\item Find $\Dvec^{n+1}  \in\R^{N_d}$ by solving
\begin{equation}\label{scheme6eq1}
\left(\Lmat_d+\frac{1}{k}\Mmat_{w,d}\Emat_w^{-1}\Mmat_{d,w}\right)\Dvec^{n+1}
=\Mmat_{w,d}\Emat_w^{-1}
\left[\frac{1}{k}\Mmat_{d,w}\Dvec^{n}-\Fvec_{w}\right]-\Fvec_\varepsilon,
\end{equation}
where $\displaystyle\Fvec_{w}\in \R^{N_w}$ and $ \Fvec_\varepsilon\in \mathbb{R}^{N_d}$ 
are defined in \eqref{scheme5eq1c}.
\item Find $\widetilde\Uvec^{n+1} \in \mathbb{R}^{N_u}$ by solving
\begin{equation}\label{scheme6eq2}
\begin{array}{l}
\displaystyle \left(\frac{1}{k}\Mmat_u +
\nu\Lmat_u+\Cmat_{u}\right)\widetilde\Uvec^{n+1}
=\frac{1}{k}\Mmat_u\widetilde\Uvec^{n}-\Fvec_{u}
+\lambda \Emat_u \Emat_w^{-1} \left[
\frac{1}{k}\Mmat_{d,w}\left(\Dvec^{n}-
\Dvec^{n+1}\right)-\Fvec_{w}\right],
\end{array}
\end{equation}
where $\displaystyle\Fvec_{u}\in \R^{N_u}$ is defined
in \eqref{scheme5eq2b}.

\item Find $\Pvec^{n+1}  \in \R^{N_p}$ by solving
\begin{equation}\label{scheme6eq3}
(\Lmat_{p}+\Jmat_p)\Pvec^{n+1}=-\frac{1}{k}\Fvec_{p},
\end{equation}
where $\displaystyle\Fvec_{p}\in \R^{N_p}$ 
is defined in \eqref{scheme5eq3b}.
\end{enumerate}
Observe that the matrix $\Lmat_d+\frac{1}{k}\Mmat_{w,d}\Emat_w^{-1}\Mmat_{d,w}$ is the Schur complement of system \eqref{scheme5eq1} with respect to the $\Emat_w$. 
\section{Numerical results}

In this section we present some numerical experiences that illustrate the stability, accuracy, efficiency and reliability of Scheme 5. First, we  test our numerical approximation simulating annihilation of singularities from the paper of Liu and Walkington \cite{Liu-Walkington-2000}. Next we will investigate the numerical accuracy with respect to time. In particular, we will see that the  splitting error for the director vector does not deteriorate the convergence rate of the velocity and pressure  from the non-incremental projection method for the Navier-Stokes equations \cite{Rannacher, Shen, Guermond-Miner-Shen}. Finally, we will check that the violation of the stability conditions given in $\rm (H4)$ will lead to unstable behaviors of the numerical approximations.


For all simulations, we have only used Scheme 5 for $j(p_h, q_h)=\tau (p_h-\Pi_0( p_h), \bar p_h-\Pi_0( \bar p_h))$, with $\tau=S /\nu$. The reason is that the implementation of the gradient version of $j(p_h, q_h)$ given in $\eqref{scheme3eq3b}_2$ needs to be carried out together with an extra variable to compute $\Pi_1(\nabla p^{n+1}_h)$, which requires an extra computational cost. We decided to include it in this paper because the numerical analysis is the same for both stabilization terms  in \eqref{scheme3eq3b}. This drawback can be solved by replacing the global projection operator $\Pi_1$ by a local Scott-Zhang projection operator but its stability analysis is quite different from those developed in this paper and the implementation requires some extra manipulations \cite{Badia}.     

We take the approximating spaces $\D_h, \V_h$  and $P_h$ as described in $\rm (H3)$. The numerical solutions are implemented with the help of FreeFem++ \cite{hecht}. 

\subsection{Annihilation} This numerical example is concerned with the phenomenon of annihilation of singularities. It was originally proposed in \cite{Liu-Walkington-2000} for a Dirichlet boundary condition for the director field and also performed in \cite{Becker-Feng-Prohl} for a Neumann boundary condition as considered herein. It is computed on the domain $\Omega=(-1,1)\times(-1,1)$ with the initial conditions being
\[
\u_0=\boldsymbol{0},\quad
\d_0=\frac{\tilde\d}{\sqrt{|\tilde\d|^2+0.05^2}},\mbox{ where } \tilde\d=(x^2+y^2-0.025,y),
\]
and the physical parameters being $\nu=\lambda=\gamma=1$. The discretization and penalization parameters are set as $(k, h,\varepsilon)=(0.001, 0.068986, 0.05)$. In Figures \ref{fig1} and \ref{fig2}, we show how the two singularities are carried to the origin by the velocity field forming four vortices. Snapshots of the director and velocity fields have been displayed at times t = $0.1$, $0.2 $, $0.3$ and $0.6$ in Figures \ref{fig1} and \ref{fig2}, respectively. The evolution of kinetic, elastic, and penalization energies, as well as the total energy, is depicted in Figure \ref{fig3}. Observe that the total energy decreases after each iteration as predicted by inequality (\ref{induction}). Moreover, the kinetic energy reaches its maximum level at the annihilation time. These numerical results are in good qualitative agreement with those obtained in \cite {Becker-Feng-Prohl}.  
\begin{figure}[t]
\centering
\subfigure[$\|\d\|_\infty=0.9970543.$]{
\includegraphics[scale=0.2]{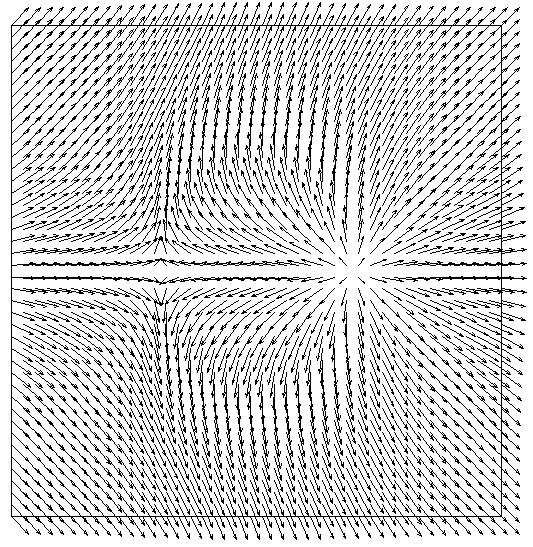}} 
\subfigure[$\|\d\|_\infty=0.9976057.$]{
\includegraphics[scale=0.2]{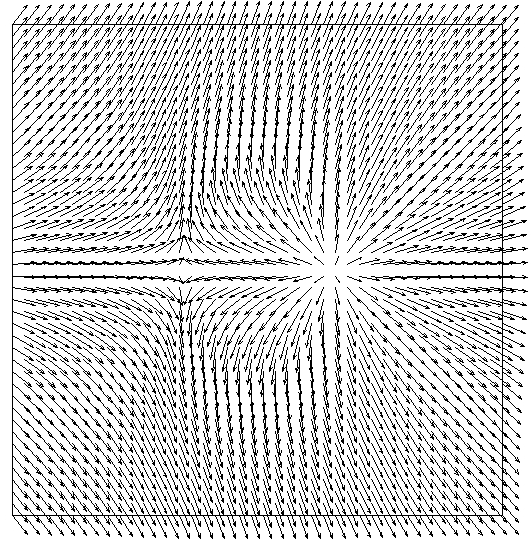}} 
\subfigure[$\|\d\|_\infty=0.9961038.$]{
\includegraphics[scale=0.2]{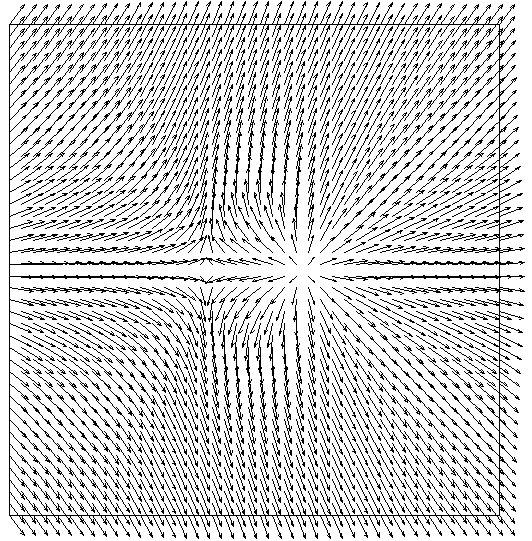}} 
\subfigure[$\|\d\|_\infty=0.9988512.$]{
\includegraphics[scale=0.2]{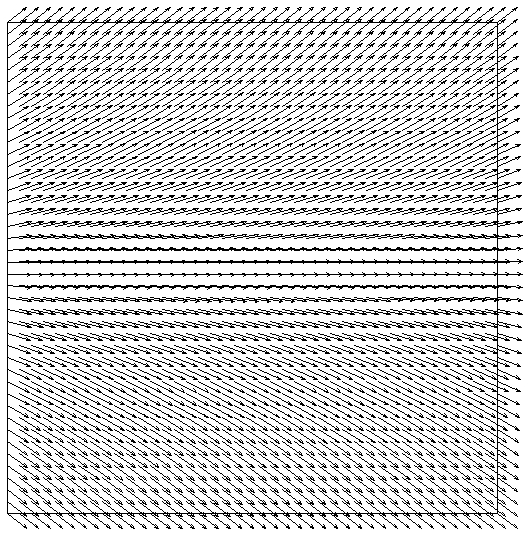}} 
\caption{Evolution of the director field at times $t=0.1, 0.2, 0,3$ and $0.6$.} 
\label{fig1}
\end{figure}

\begin{figure}[t]
\centering
\subfigure[$\|\v\|_\infty=0.2069006.$]{
\includegraphics[scale=0.21]{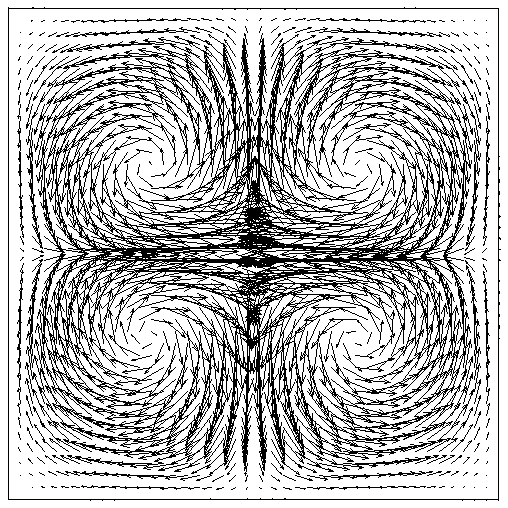}} 
\subfigure[$\|\v\|_\infty=0.1634978.$]{
\includegraphics[scale=0.21]{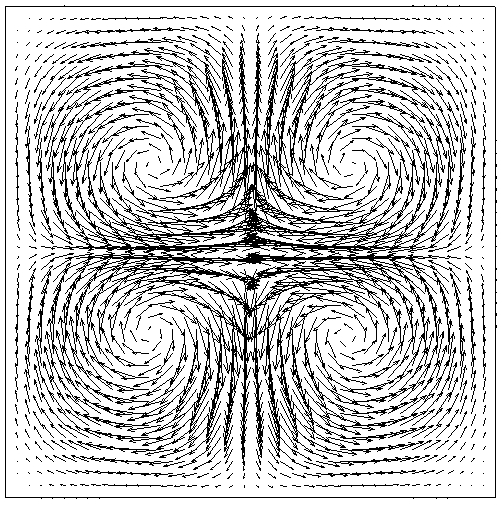}} 
\subfigure[$\|\v\|_\infty=0.2160761.$]{
\includegraphics[scale=0.21]{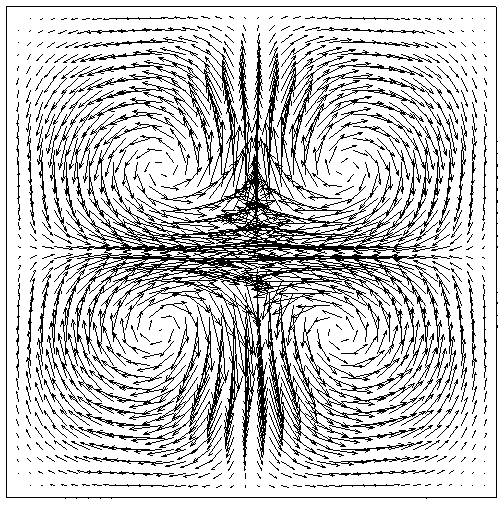}} 
\subfigure[$\|\v\|_\infty=0.001592144.$]{
\includegraphics[scale=0.21]{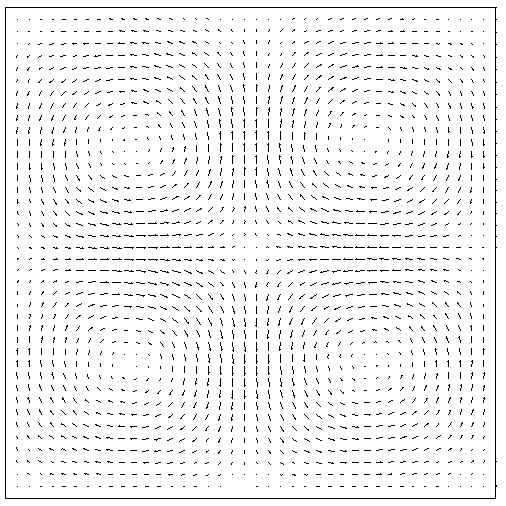}}
\caption{Evolution of the velocity field at times $t=0.1, 0.2, 0,3$ and $0.6$. 
The relative size of the vectors were modified for better visualization.
} 
\label{fig2}
\end{figure}

\begin{figure}[t]
\centering
\subfigure{
\includegraphics[scale=0.35]{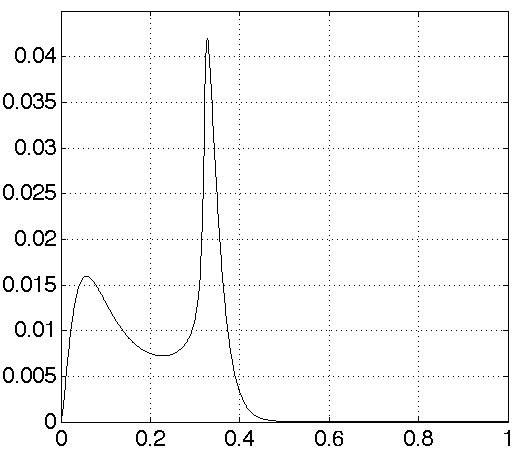}} 
\hspace{1cm}
\subfigure{
\includegraphics[scale=0.35]{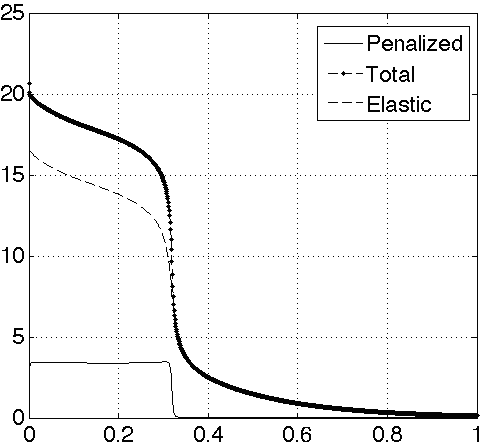}} 
\caption{Energies for the experiment of singularities. Kinetic energy 
(left) and total, elastic and penalization energy (right)} 
\label{fig3}
\end{figure}

\subsection{Convergence rate} We are now interested in the accuracy with respect to time. In doing so, we consider $\Omega=(0,1)\times(-\frac{1}{2},\frac{1}{2})$ and $\lambda=\gamma=\nu=1$. The initial data are taken as
\[
\u_0=\boldsymbol{0},\mbox{ and } \d_0=(\sin(a),\cos(a)),\quad\mbox{ where } a=\pi(\cos(\pi x)+\sin(\pi y)).
\]
which satisfies homogeneous Dirichlet conditions for the velocity field  and homogeneous Neumann  boundary conditions  for the director field. The reference solution is taken as the numerical approximation computed with the parameters $(k,h,\varepsilon)=( 1.5625\cdot 10^{-6}, 0.068986, 0.05)$. 
  
In Figure \ref{fig4} and Table \ref{Table1},  we illustrate the error bevahiour and the convergence rate on the director, velocity and pressure fields measured in the $L^2(\Omega)$- and $H^1(\Omega)$-norm versus the time step. The tests have been performed by comparing our reference solution with the numerical 
approximation computed on five time-steps $k_{i+1}=0.5 k_{i}$ for $i=1, 2, 3, 4$ with $k_1=10^{-3}$.    

The error on the velocity and director vector is of ${\mathcal O}(k)$  in the $\L^2(\Omega)$-norm, respectively, which is consistent with the results for the velocity in the context of the non-incremental projection method for the Navier-Stokes equations. The error on the director field in the $\H^1(\Omega)$-norm are of ${\mathcal O}(k)$, which means the splitting error associated to the segregation of the director field is the best that can be expected. Instead, the error on the velocity field in the $\H^1(\Omega)$-norm does not maintain the first-order accuracy from the beginning, which could mean that the theoretical order of approximation will be less than first-order. Furthermore, the error on the pressure in the $L^2(\Omega)$- and $H^1(\Omega)$-norm behaves as that on the velocity for the $H^1(\Omega)$-norm.

\begin{figure}[t]
\centering
\subfigure{
\includegraphics[scale=0.3]{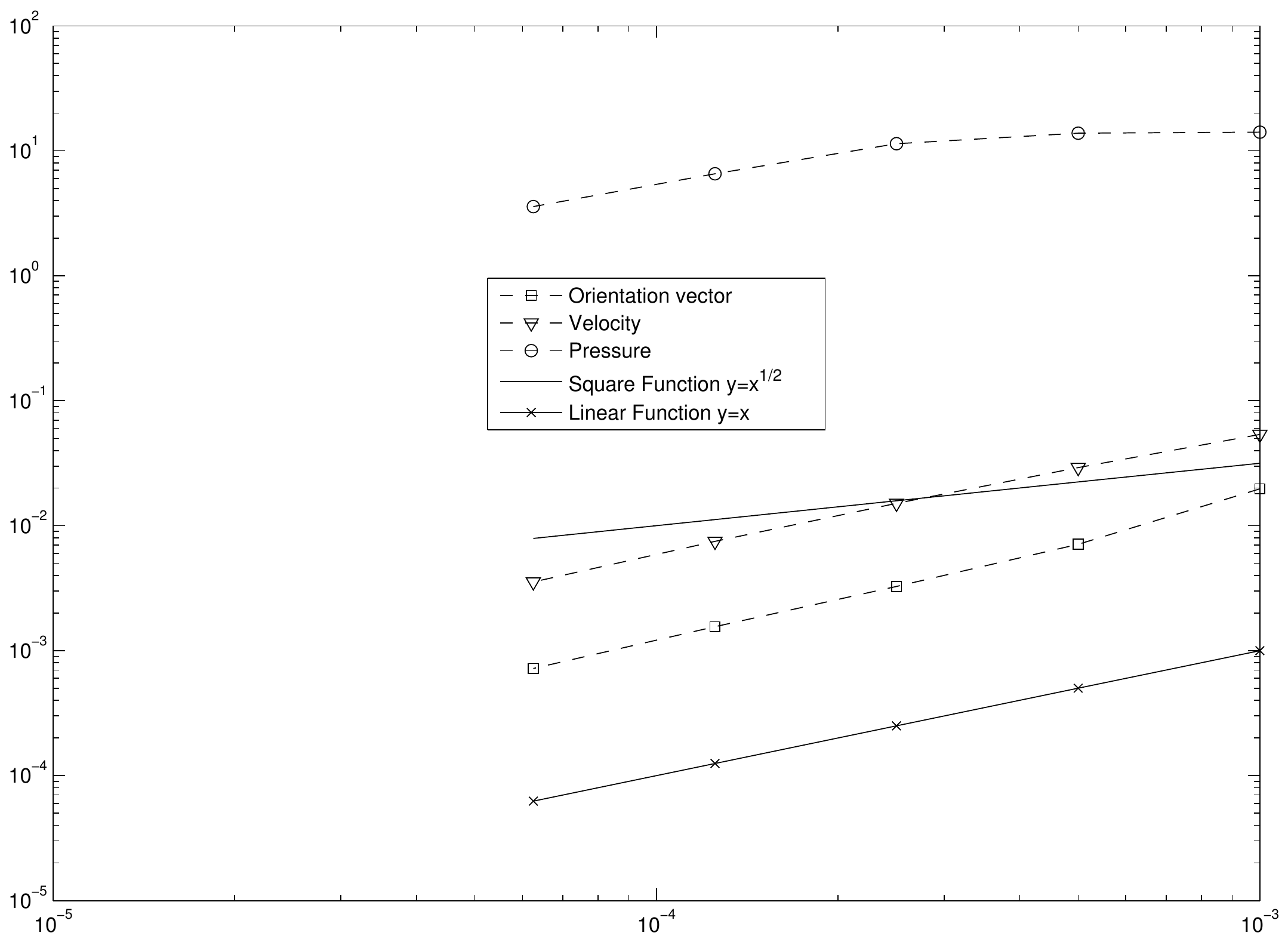}} 
\subfigure{
\includegraphics[scale=0.3]{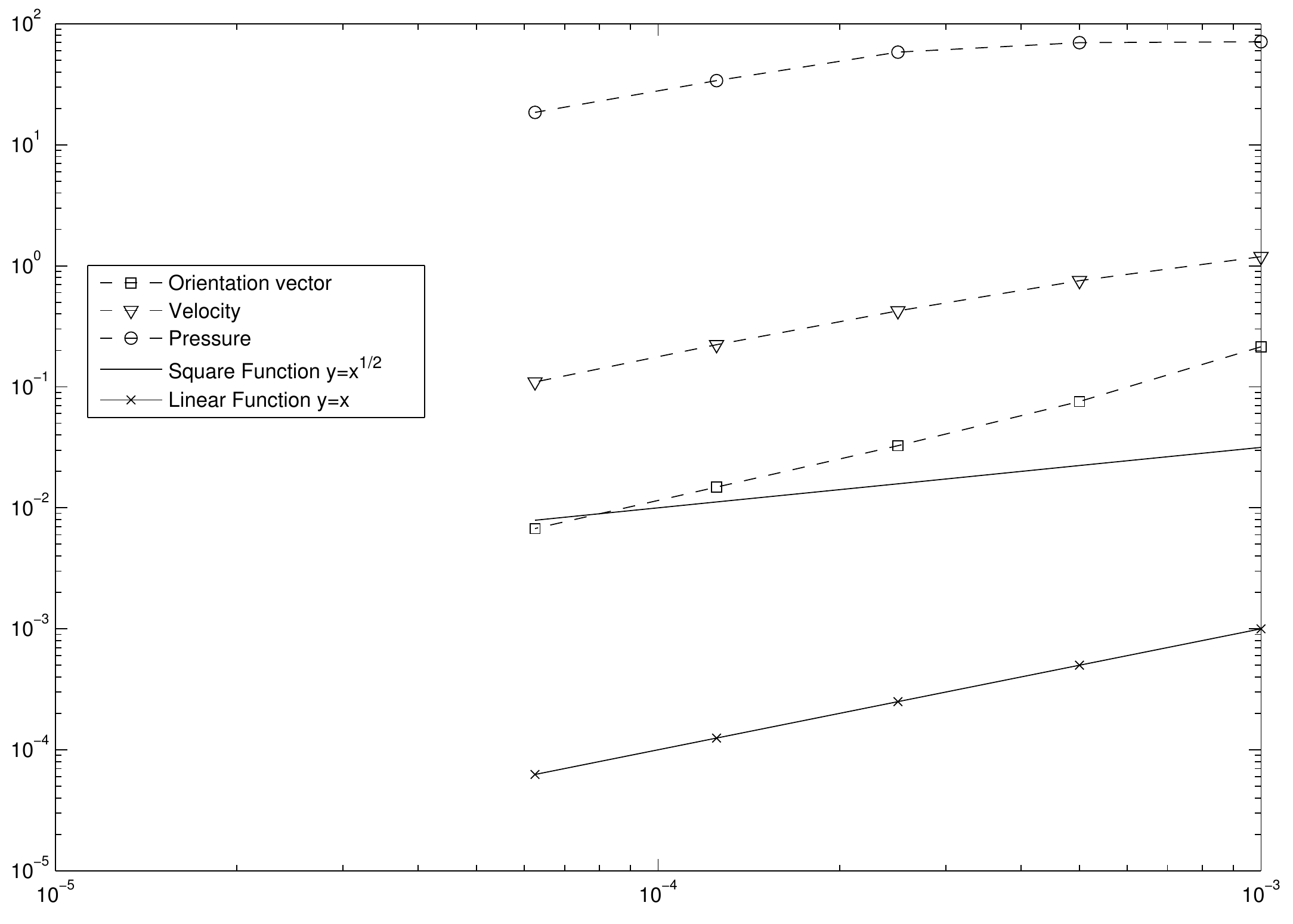}} 
\caption{ Evolution in time of the error in the $L^2(\Omega)$-norm (left) and the $H^1 (\Omega)$-norm (right)  for the director, velocity and pressure.} 
\label{fig4}
\end{figure}
\begin{table}[h]
\begin{center}
\begin{tabular}{| c | c | c | c | c | c | c  |} 
\hline 
$k$& $L^2$-rate-$p$ & $\L^2$-rate-$\v$ & $\L^2$-rate-$\d$& $H^1$-rate-$p$ & $\H^1$-rate-$\v$& $\H^1$-rate-$\d$
\\
\hline
$10^{-3}$& -- & -- & --  & --  & -- & --
\\ 
\hline
$5 \cdot 10^{-4}$& 0.0295 & 0.8801 & 1.4751 & 0.0224 &0.6581 & 1.5035
\\
\hline
$2.5 \cdot 10^{-4}$& 0.2778 & 0.9517 & 1.1205 & 0.2601 &0.8292& 1.2142
\\ 
\hline 
$1.25 \cdot 10 ^{-4}$& 0.8017 & 1.0084 & 1.0698 & 0.7832 & 0.9261& 1.1338
\\
\hline
$6.25\cdot 10^{-5}$& 0.8723 & 1.0783 & 1.1116 & 0.8742 & 1.0234 & 1.1396
\\
\hline
\end{tabular}
\end{center}
\caption{The convergence rates for the velocity, pressure and director}
\label{Table1}
\end{table}

\subsubsection{Dependence on parameters}  As it is well-known for time-splitting schemes, conditions as those given in (\rm H4) deteriorate the benefits of Scheme 5 because the number of linear systems to be solved increases when $(h, \varepsilon)$ are small. Thus one could think that these conditions reduce the efficiency of Scheme 5. However, as far as we are concerned, linear \cite{Guillen-Gutierrez} and nonlinear \cite{Becker-Feng-Prohl} Euler time-stepping algorithms require such conditions. The former in order for numerical approximations to have a priori energy estimates, while the latter in order for the associated iterative process (e.g. Newton's method) to be convergent. Therefore, we somehow have to impose conditions for $(k,h, \varepsilon)$, and hence that Scheme 5 is a good alternative for saving a lot of computational work in approximating  (\ref{Ginzburg-Landau-Problem}). 
  
To demonstrate the dependence of the numerical approximations on the discretization and penalization parameters according to the conditions given in $\rm (H4)$, we will test the sensitivity of Scheme 5 when varying the parameters $(k, h, \varepsilon)$. In doing so, we will consider the phenomenon of annihilation of singularities described above.  
In particular, we will focus on condition (\ref{Constraint2}). Thus, define $\alpha=\frac{k}{h^{3/2}\varepsilon}$ and select $\varepsilon=5\times 10^{-2}$. We want to compute our numerical approximation for $(k,h)$ where $h=0.0912396, 0.068986, 0.0463677, 0.0233754$ and $k=10^{-s}$ with $s=1, 2, 3 ,4$.  The annihilation times $T_A$ reported in Table \ref{Table2} are taken as those times where the value of the kinetic energy is maximum. In particular, we have observe that the total energy does not remain bounded as the parameter $\alpha$ becomes sufficiently large. That is, we find that our numerical approximations have no energy bounds. Therefore, Scheme 5 requires that the conditions given in $\rm (H4)$ holds in order to have a priori energy estimates in the presence of singularities.

\begin{table}[h]
\begin{center}
\begin{tabular}{|c|c|c|c|c|c|c|c|c|c}
\hline
$k\backslash h$  & $0.0912396$ & 
$0.068986$ & 
$0.0463677$& 
$0.0233754$ &\\\hline
 & 72.5697
 &110.379
 & 200.312 
 & 559.617 & $\alpha$ \\
$10^{-1}$ & \ding{55} & \ding{55}   & \ding{55} &\ding{55}  & Stab. \\
                   & $--$  & $--$   &$--$ & $--$  & $T_A$ \\
                & $--$ & $--$ &   $--$ &$--$ & $E_{kin}$ \\\hline            
 &7.25697  
 &11.0379  
 &20.0312
 &55.9617 & $\alpha$      \\
$10^{-2}$ & \ding{55}  & \ding{55}  & \ding{55}  & \ding{55}  & Stab. \\
                  & $--$ & $--$ & $--$  & $--$ & $T_A$ \\
               & $--$ & $--$ & $--$   & $--$  & $E_{kin}$ \\\hline            
  & 0.725697
  & 1.10379 
  & 2.00312    
  & 5.59617  & $\alpha$   \\
$10^{-3}$ & \ding{51} & \ding{51}    & \ding{51}  & \ding{51}  & Stab. \\
   & 0.322   &  0.328   &  0.334     &  0.338 & $T_A$ \\
   & 0.0422756  &  0.0420097    &  0.0418536     &  0.041728  &$E_{kin}$ \\\hline            
&0.0725697& 0.110379& 0.200312 &0.559617& $\alpha$   
\\
$10^{-4}$ & \ding{51} &  \ding{51}  &  \ding{51} &  \ding{51} & Stab. 
\\
& 0.3046  & 0.3105    &  0.3154     & 0.3188  & $T_A$ 
\\
&0.0490944 &  0.0487923  &  0.0485807  & 0.0484494  &$E_{kin}$ 
\\
\hline
\end{tabular}
\end{center}
\caption{Dependence of Scheme 5 on the parameters   for the annihilation phenomenon with $\varepsilon=5\times 10^{-2}$ measured by  the value of $\alpha=k/(h^{3/2}\varepsilon)$ with $h=0.0912396, 0.068986, 0.0463677, 0.0233754$ and $k=10^{-s}$ with $s=1, 2, 3 ,4$. $T_A$ is the aniquilation time}
\label{Table2}
\end{table}


\end{document}